\documentclass{siamart251216}
\usepackage{upquote}
\usepackage{graphicx}
\usepackage{algcompatible}
\usepackage{enumitem}
\usepackage{latexsym}
\usepackage{amssymb}
\usepackage{pdflscape}
\usepackage{mathabx} 
\usepackage{pifont}
\usepackage{booktabs}
\usepackage{array}
\usepackage{multirow}
\graphicspath{{figures/}} 
\newcommand{\eqnref}[1]{(\ref{#1})}
\newcommand{\eps}{\epsilon}
\newcommand{\CS}{c_s}

\newcommand{\xmark}{\ding{55}} 
\newsiamthm{example}{Example}
\newsiamthm{remark}{Remark}

\title{Error bounds for the Sherman-Morrison formula and its modification with improved stability}
\author{Behnam Hashemi\thanks{School of Computing and Mathematical Sciences, University of Leicester, Leicester, LE1 7RH, UK. Supported by EPSRC grant UKRI4034. {\tt b.hashemi@le.ac.uk}.}
\and 
Yuji Nakatsukasa\thanks{Mathematical Institute, University of Oxford, Oxford, OX2 6GG, UK. Supported by EPSRC grants EP/Y010086/1 and EP/Y030990/1. {\tt nakatsukasa@maths.ox.ac.uk}.}
}

\begin{document}
\maketitle

\begin{abstract} 
It is known that the Sherman--Morrison (SM) formula is not numerically stable. In recent work, we introduced SMIR, an algorithm that incorporates iterative refinement to enhance the SM backward error. In this paper we take a different route: adapting an algorithm of Govaerts, originally designed for general bordered linear systems, we develop a modified Sherman–Morrison (MSM) method whose built-in self-correction makes it surprisingly resilient. Whereas standard SM requires the solution of two $n\times n$ linear systems, MSM requires three; by contrast, SMIR requires $2+k$ solves, where $k$ is the number of IR steps and can be substantially larger than three, when IR converges slowly. We then derive backward and forward error bounds for both SM and MSM. The SM backward error bound established here is stronger than the one proved in [Hashemi \& Nakatsukasa 2026]: it accounts for every rounding error and holds with no conditions on the size of the capacitance. From these bounds we extract growth factors that are cheap to compute a posteriori and can be used to certify the backward and forward stability of SM and MSM on a given problem. In our experiments, MSM consistently produces backward stable solutions, and is therefore observed to be forward stable as well. A formal proof of the stability --- or instability --- of MSM remains an open problem.
\end{abstract}
\begin{keywords} 
Sherman-Morrison formula; backward stability; forward stability
\end {keywords}
\begin{AMS} 65Gxx, 65F55 \end{AMS}

\section{Introduction}
Solving rank-one perturbed linear systems is an important problem across computational science. Applications arise in least-squares problems, electrical networks, sensitivity analysis in linear programming, quasi-Newton methods in unconstrained optimization, and the numerical solution of ordinary and partial differential equations. In quasi-Newton methods, for example, Hessian approximations are updated by low-rank corrections. In differential equations, such systems arise in a variety of situations. For instance, after solving a problem with one boundary condition, one may need to solve a closely related problem with a slightly different boundary condition. In such cases, the new coefficient matrix can often be obtained from the original one by a rank-one perturbation. See~\cite{Fortunato21, Olver13} for further applications to the numerical solution of differential equations involving almost-banded matrices, which arise as low-rank perturbations of banded matrices. 

The celebrated Sherman--Morrison (SM) formula has been used extensively for solving rank-one perturbed linear systems, often allowing one to take advantage of a previously computed solution. Given a nonsingular $n\times n$ real matrix $A$ and $n \times 1$ vectors $u, v$ and $b$, it solves 
\begin{equation}  
\label{eq:maingoal}
(A+uv^T)x = b  
\end{equation}
via  $x = A^{-1}b - A^{-1}u[(1 + v^T\!A^{-1}u)^{-1}v^T\! A^{-1}b]$, where we note the term inside the bracket is a scalar. To make the computational procedure explicit, we present it in Algorithm~\ref{SM:alg}.
\begin{algorithm}[!h]
\caption{SM: Sherman-Morrison formula for solving $(A+uv^T)x = b$.}
\label{SM:alg}
\begin{algorithmic}[1]
\STATE Solve $Ay = b$ for $y$. 
\STATE Solve $Az=u$ for $z$.  
\STATE Compute $\alpha = v^T y$, the capacitance $\beta = 1+v^T z$, and $\theta = \alpha / \beta$. 
\STATE Output $x = y - z \theta$.
\end{algorithmic}
\end{algorithm}

For an overview of various aspects of the SM formula, we refer the reader to the classic paper by Hager~\cite{Hager89}. The present paper focuses on the backward and forward stability properties of the SM formula. Among the relatively few studies addressing the stability of the SM formula is the work of Yip~\cite{Yip86}. Yip considers solving linear systems with large condition numbers by applying a low-rank perturbation so that the resulting system is better conditioned. Yip’s analysis discusses the effect of the capacitance (denominator in Step 3) being close to zero, showing that this can deteriorate the forward error. Yip's concern is primarily with the forward stability of the SM formula. See also~\cite{Hao21}.

A notable open problem posed in Higham’s seminal monograph on numerical stability~\cite[p.~570]{Higham02} asks whether the SM algorithm~\ref{SM:alg} admits a backward error bound proportional to $\kappa_{\infty}(A) = \|A^{-1}\|_{\infty} \|A\|_{\infty}$, the infinity-norm condition number of $A$. In~\cite{SM25}, we established sufficient conditions under which this question can be answered affirmatively. More precisely, the analysis in \cite{SM25} assumes that
\begin{equation}
\label{hypoth1:eq}
 |v^T \widehat z| > 1.1 
\end{equation}
and focuses on the error in the subtraction, i.e., it assumes that the operations in computing $\widehat \alpha, \widehat \beta$, their division and multiplication with $\widehat z_i$ are performed in exact arithmetic. It then proves that \cite[Prop. 3.2]{SM25}
\[
\| r \| \leq \,\eps_M \frac{\check c}{1 - c_1 \eps_M \kappa(A)} \Big( c \| A\| + \|A + u v^T\| \Big)\, \|A^{-1}\| \, \|b\|.
\]

\paragraph{Contributions and organization of the paper}
Our first aim is to address Higham's question in greater generality: in our analysis in Section 2, we do not assume~\eqnref{hypoth1:eq}, and derive a rigorous error bound for the backward error that takes {\em all} rounding errors into account. Since our new bound does not require~\eqref{hypoth1:eq} to hold, it also clarifies why a small capacitance does not necessarily compromise backward stability of SM; see the discussion above on the effect of capacitance on the forward error, as well as the two illustrative examples in Subsection~\ref{smallBeta:sec}. In addition to a backward error bound, in Section 2, we also derive forward error bounds for the SM solution, thereby also answering a research problem in Higham's book~\cite[p. 487, Problem 26.2 (b)]{Higham02}. In Section 3, we propose a modified Sherman-Morrison (MSM) algorithm (Alg.~\ref{MSM:alg}) whose stability properties are much stronger than the standard SM, namely Alg.~\ref{SM:alg}. Backward and forward error bounds for MSM are established in Section~4 where we also introduce growth factors, that, when of order one, guarantee backward or forward stability of both SM and MSM. To our knowledge, all these bounds are new. In Section~5, we illustrate the error bounds and the performance of MSM through numerical experiments. 

\paragraph{Notation and basics assumptions} The remainder of this section gives a brief summary of the preliminaries used throughout the paper. The standard notation for unit roundoff is $u$. Here, however, we reserve $u$ and $v$ for the vectors forming the rank-one perturbation $uv^T$. We therefore use $\eps_M$ to denote the unit roundoff. We assume that \(n\eps_M<1\) so that  $\gamma_n := \frac{n\eps_M}{1-n\eps_M}=   n\eps_M+\mathcal O(\eps_M^2)$. Throughout the paper, we assume that linear systems with a nonsingular $n \times n$ matrix $A$ can be solved efficiently, either because of the structure of $A$ or because a decomposition of $A$ is already available, such as an LU or QR factorization.\ These are precisely the settings in which the SM formula is a reasonable computational tool~\cite{Hager89}. We also make the standard assumptions that linear systems with $A$ can be solved in a normwise backward stable manner which means that if a solver $S$ is applied for solving $A y = b$ in floating-point arithmetic, then there exists a modest constant\footnote{In practice, $\CS$ is typically a small constant; see, e.g.,~\cite[p. 102]{Golub13} and~\cite[Thm. 16.2 ]{TB:2022}, although rigorous error bounds~\cite[p. 165]{Higham02} replace $\CS \eps_M$ with $n^2 \gamma_{3n} \rho_n$, where $\rho_n$ is the growth factor for Gaussian elimination with partial pivoting.} $\CS$, a matrix $\Delta A$ and a vector $\Delta b$ such that 
\begin{equation}
\label{eq:nrmBackStab_Ayb}
(A + \Delta_1 A) \widehat y = b + \Delta_1 b \quad \mbox{ with } \quad \| \Delta_1 A\| \leq \CS \eps_M \|A\|,\quad \| \Delta_1 b\| \leq \CS \eps_M \|b\|.
\end{equation}
Throughout, $\|\cdot\|$ denotes the Euclidean vector norm and the induced matrix
$2$-norm. We use the standard relative error counter notation~\cite[p. 63]{Higham02}
\[
    \langle k\rangle
    =
    \prod_{i=1}^k(1+\delta_i)^{\rho_i},
    \qquad
    |\delta_i|\leq \eps_M,
    \qquad
    \rho_i = \pm 1
\]
and apply rules such as $ \langle j\rangle\langle k\rangle = \frac{\langle j\rangle}{\langle k\rangle}=\langle j+k\rangle$; see \cite[p. 68]{Higham02}.

To represent the backward error of a basic floating point operation $j = 1,2, \dots$ involving vectors (e.g., scalar-vector multiplication, addition of vectors, inner products), we use diagonal matrices $\Theta_j = I_n + D_j$ where $I_n$ is the $n\times n$ identity matrix and $D_j$ satisfies $|D_j| \leq \eps_M I_n$ or $|D_j| \leq \gamma_n I_n$ depending on the floating point operation in use. See e.g., \cite[p. 198]{Geijn14} and \cite[pp. 63-68]{Higham02}). In both cases, \(\Theta_j\) is nonsingular by Neumann series which also gives
$\Theta_j ^{-1}  = (I+D_j)^{-1} = I - D_j + D_j^2 -\dots$ and
\begin{equation}
\label{eq:Theta-inv-bnd}
\|\Theta_j^{-1}  \| \le \frac{1}{1-\|D_j\|}.
\end{equation}
Specifically, in the case in which $|D_j| \leq \eps_M I_n$, we have
\begin{equation}
\label{eq:Theta-I-bnd}
\|\Theta_j  - I\| \le \eps_M, \qquad \mbox{ while } \qquad \|\Theta_j^{-1}  - I\|  \le \gamma_1 = \frac{\eps_M}{1-\eps_M}.
\end{equation}

\section{Error bounds for SM} 
We begin with a backward error representation for the SM solution.
\begin{theorem} \label{thm:SM-back-bnd}
Assume linear systems $Ay=b$ and $Az=u$ are solved normwise backward stably. Then, the computed solution $\widehat x$ of SM in floating point arithmetic satisfies
\begin{equation} \label{eq:SM-back-bnd}
\Big( (A+\Delta A) + (u+\Delta u) (v+\Delta v)^T \Big) \widehat x = b + \Delta b + \Delta F\ \widehat y.
\end{equation}
where
\begin{align}
&     \|\Delta A\| \leq \Big(   (2+\CS)\eps_M + \mathcal O(\eps_M^2) \Big) \|A\| \label{eq:DeltABnd}\\
& \|\Delta u\| \le \CS \eps_M \|u\| \nonumber\\
& \|\Delta v\| \le 
   \Big( (n+2)\eps_M +\mathcal{O}(\eps_M^2) \Big) \|v\| \label{eq:DeltvBnd}\\
& \|\Delta b\| \le \CS \eps_M \|b\| \nonumber\\   
&    \|\Delta F\|  \leq \eps_M \Big(
    (2\CS+1)\|A\|
    +
    (2n+3)\|u\|\,\|v\|  \Big)
    +
    \mathcal O(\eps_M^2).\label{eq:T_Bnd}
\end{align}
and $\CS$ is the stability constant of the solver with $A$ as in~\eqref{eq:nrmBackStab_Ayb}. 
\end{theorem}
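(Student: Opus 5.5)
I would model every floating-point step of Algorithm~\ref{SM:alg}, substitute the resulting relations into one another, and group the perturbations so that they attach either to $A$, to $u$, to $v$, to $b$, or to the leftover term $\Delta F\,\widehat y$.

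\emph{Step 1 (the two solves).} By \eqref{eq:nrmBackStab_Ayb},
\[
(A+\Delta_1A)\widehat y=b+\Delta b,
\qquad
(A+\Delta_2A)\widehat z=u+\Delta u,
\]
with $\|\Delta_iA\|\le \CS\eps_M\|A\|$, $\|\Delta b\|\le\CS\eps_M\|b\|$ and $\|\Delta u\|\le\CS\eps_M\|u\|$. These already give the stated bounds for $\Delta u$ and $\Delta b$.

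\emph{Step 2 (the scalar computations).} The inner products satisfy
\[
\widehat\alpha=v^T\Theta_1\widehat y,
\qquad
\widehat\beta=(1+v^T\Theta_2\widehat z)\langle1\rangle,
\]
with $|\Theta_j-I|\le\gamma_n I$. The division gives $\widehat\theta=\widehat\alpha/\widehat\beta\,\langle1\rangle$. The final update is $\widehat x=\Theta_3(\widehat y-\Theta_4\widehat z\widehat\theta)$ with $|\Theta_{3,4}-I|\le\eps_M I$. Solving this last relation for $\widehat y$ gives
\[
\widehat y=\Theta_3^{-1}\widehat x+\Theta_4\widehat z\widehat\theta .
\]

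\emph{Step 3 (the key identity).} I would write $\widehat\theta\widehat\beta=\widehat\alpha\langle2\rangle$. Absorbing the $\langle\cdot\rangle$ factors into a diagonal matrix multiplying $v$, this reads
\[
\widehat\theta\bigl(1+\widetilde v^T\widehat z\bigr)=\widetilde v^T\widehat y,
\]
where
\begin{itemize}
\item $\widetilde v$ is a perturbation of $v$ of relative size $(n+2)\eps_M$;
\item the perturbations from $\widehat\alpha$ and $\widehat\beta$ are placed on a common $v$ by moving the mismatch into the scalar factor $\langle\cdot\rangle$ on $\widehat\theta$.
\end{itemize}
The real subtlety is that $\Theta_1\neq\Theta_2$, so a single $\Delta v$ cannot absorb both. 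I would set $v+\Delta v:=\Theta_2^Tv\langle2\rangle$ (size $(n+2)\eps_M$, matching \eqref{eq:DeltvBnd}). The discrepancy $v^T(\Theta_1-\Theta_2\langle\cdot\rangle)\widehat y$ is then treated as a rank-one term $u\,[v^T E\,\widehat y]$, with $\|E\|\lesssim(2n+2)\eps_M$. This term is exactly of the form $\Delta F\,\widehat y$ and accounts for the $(2n+3)\|u\|\|v\|$ part of \eqref{eq:T_Bnd}.

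\emph{Step 4 (assembling the equation).} I would compute $(A+uv_\Delta^T)\widehat x$ by substituting $\widehat x=\Theta_3^{-1}\ldots$. Concretely, $A\widehat x=A\Theta_3(\widehat y-\Theta_4\widehat z\widehat\theta)$. Then:
\begin{itemize}
\item replace $A\widehat y$ using the first solve;
\item replace $A\widehat z$ using the second solve;
\item use $u\,v_\Delta^T\widehat x=u(v_\Delta^T\widehat y-v_\Delta^T\widehat z\,\widehat\theta)+\ldots$ together with Step~3. This makes the $u\widehat\theta$ terms cancel up to $O(\eps_M)$ multiples.
\end{itemize}
The remaining errors fall into two groups. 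Terms of the form $(\Theta_3-I)A\widehat x$, $A(\Theta_4-I)$ and $\Delta_2A\,\widehat z\widehat\theta$ are proportional to $\widehat x$ and go into $\Delta A$. Terms proportional to $\widehat y$ (from $\Delta_1A$, $\Theta_3$ conjugation, and the leftover $\Delta_2A$ part) go into $\Delta F$. The $\widehat z\widehat\theta$ pieces are rewritten as $\widehat y-\Theta_3^{-1}\widehat x$ so that they split between $\widehat x$ and $\widehat y$. This splitting is what produces the coefficient $2\CS+1$ in \eqref{eq:T_Bnd}, with $\CS$ entering once from each solve. The two diagonal perturbations $\Theta_3,\Theta_4$ together with one $\CS$ give \eqref{eq:DeltABnd}. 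Finally, I would bound everything with \eqref{eq:Theta-I-bnd} and \eqref{eq:Theta-inv-bnd}, discarding products of perturbations into $\mathcal O(\eps_M^2)$.

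\emph{Main obstacle.} The hardest part is the bookkeeping in Step~4. Every error must be attributed to $\widehat x$ or $\widehat y$, and never to $\widehat z\widehat\theta$, whose size is uncontrolled when $\beta$ is small. The statement avoids any condition on $|\beta|$, so I would systematically eliminate $\widehat z\widehat\theta$ via $\widehat z\widehat\theta=\Theta_4^{-1}(\widehat y-\Theta_3^{-1}\widehat x)$ before taking any norms. I would also check that the constants land as stated, adjusting which $\Theta$ goes into $\Delta A$ versus $\Delta F$ if necessary.
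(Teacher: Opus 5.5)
Your architecture matches the paper's. You eliminate $\widehat z\widehat\theta$ through the final-update relation, turn $A\widehat z\,\widehat\theta$ into $(u+\Delta u)\widehat\theta$ via the $z$-solve, and split everything between $\widehat x$ and $\widehat y$. Once the $\widehat z\widehat\theta$ pieces are split as you describe, the $A$-part of your Step~4 does reproduce $A+\Delta A=(A+\Delta_2A)\Theta_3^{-1}\Theta_4^{-1}$ and the $(2\CS+1)\|A\|$ part of $\Delta F$.

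The gap is in Step~3: the choice $v+\Delta v:=\langle 2\rangle\Theta_2^Tv$ does not close. Since $(u+\Delta u)\widehat\theta$ sits on the left with $|\widehat\theta|$ uncontrolled, the $u\widehat\theta$ terms must cancel \emph{exactly}, not ``up to $O(\eps_M)$ multiples''. So you need an exact identity for $\widehat\theta$. Substitute $\widehat z\widehat\theta=\Theta_4^{-1}\widehat y-\Theta_3^{-1}\Theta_4^{-1}\widehat x$ (your labelling) into $\widehat\theta+v^T\Theta_2\widehat z\,\widehat\theta=\langle 2\rangle v^T\Theta_1\widehat y$. This gives
\[
\widehat\theta=v^T\Theta_2\Theta_3^{-1}\Theta_4^{-1}\,\widehat x+v^T\bigl(\langle 2\rangle\Theta_1-\Theta_2\Theta_4^{-1}\bigr)\widehat y .
\]
The coefficient of $\widehat x$ therefore carries the two final-update roundings, not the $\langle 2\rangle$ coming from the addition of $1$ and the division.

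With your $\Delta v$ you are left with $(u+\Delta u)\,v^T\Theta_2\bigl(\Theta_3^{-1}\Theta_4^{-1}-\langle 2\rangle I\bigr)\widehat x$. This is a rank-one term of size up to about $4\eps_M\|u\|\,\|v\|$ acting on $\widehat x$, and it has nowhere to go:
\begin{itemize}
\item It cannot go into $\Delta A$, whose bound involves only $\|A\|$, while $\|u\|\,\|v\|$ may be far larger.
\item It cannot go into $\Delta F\,\widehat y$, since rewriting $\widehat x$ in terms of $\widehat y$ reintroduces $\widehat z\widehat\theta$.
\end{itemize}
So the adjustment you allow for at the end (moving $\Theta$'s between $\Delta A$ and $\Delta F$) cannot repair it.

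The fix is to take $v+\Delta v:=\Theta_2\Theta_3^{-1}\Theta_4^{-1}v$, which still has relative size $(n+2)\eps_M$, and leave $\langle 2\rangle$ in the $\widehat y$-coefficient. Accordingly, the discrepancy that goes into $\Delta F$ is $-(u+\Delta u)\,v^T\bigl(\langle 2\rangle\Theta_1-\Theta_2\Theta_4^{-1}\bigr)$, not $u\,v^T(\Theta_1-\Theta_2\langle\cdot\rangle)$. Its norm is at most $\bigl((n+2)+(n+1)\bigr)\eps_M\|u\|\,\|v\|$ to first order, which is where $2n+3$ comes from. Your $2n+2$ misses the rounding of $\widehat z\otimes\widehat\theta$. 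With these two corrections your argument coincides with the paper's proof, up to swapping the labels of $\Theta_3$ and $\Theta_4$.
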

Let us note that~\eqref{eq:SM-back-bnd} almost implies SM is backward stable; it would if the final term $\Delta F\widehat y$ was not present.
\begin{proof} 
We begin from the floating-point inner products in Step 3 of Alg.~\ref{SM:alg}. The following basic relations hold (see e.g., \cite[p. 198]{Geijn14} and \cite[pp. 63-68]{Higham02}):
\[
    \operatorname{fl}(v^T\widehat y)
    =
    v^T\Theta_1 \widehat y,
    \qquad
    \operatorname{fl}(v^T\widehat z)
    =
    v^T\Theta_2 \widehat z,
\]
where $\Theta_1  = I+D_y$ and $\Theta_2  = I_n+D_z$ with
\begin{align*}
&    D_y
    =
    \operatorname{diag}
    \bigl(
        \theta_n^{(y,1)},
        \theta_n^{(y,2)},
        \theta_{n-1}^{(y,3)},
        \ldots,
        \theta_2^{(y,n)}
    \bigr),\\
    &    D_z
    =
    \operatorname{diag}
    \bigl(
        \theta_n^{(z,1)},
        \theta_n^{(z,2)},
        \theta_{n-1}^{(z,3)},
        \ldots,
        \theta_2^{(z,n)}
    \bigr).
\end{align*}
Here
$  |\theta_j^{(y,i)}|\le \gamma_j$ and $  |\theta_j^{(z,i)}|\le \gamma_j.$
In particular,
\begin{equation}
\label{eq:D_yzBnd}
    |D_y|\le \gamma_n I,
    \qquad
    |D_z|\le \gamma_n I.
\end{equation}
In SM we compute $\widehat \alpha = fl(v^T \widehat y) =v^T\Theta_1 \widehat y$.
Similarly
\[
\widehat \beta = fl(1 + v^T \widehat z) = 1 \oplus v^T\Theta_2 \widehat z = (1 + v^T\Theta_2 \widehat z) \langle 1\rangle
\]
Then, 
\begin{equation}
\label{eq:theta_alpha_beta}
\widehat \theta := fl( \frac{\widehat \alpha}{\widehat \beta}) =  \frac{\widehat \alpha}{\widehat \beta}  \langle 1\rangle = \frac{\widehat \alpha}{1 + v^T\Theta_2 \widehat z}  \frac{\langle 1\rangle}{\langle 1\rangle}  
= \frac{v^T\Theta_1 \widehat y}{1 + v^T\Theta_2 \widehat z}  \langle 2\rangle.
\end{equation}

In the final step of SM, we have
\[
\widehat x = \widehat y \ominus (\widehat z \otimes \widehat \theta)
\]
For each entry $\widehat x_i := \widehat y_i \ominus (\widehat z_i \otimes \widehat \theta)$ we have
\[
\widehat z_i \otimes \widehat \theta = (\widehat z_i  \widehat \theta) (1+\delta_i^{(z)}), \qquad \mbox{ where } \qquad |\delta_i^{(z)}| \leq \eps_M
\]
and
\[
\widehat y_i \ominus \widehat z_i \otimes \widehat \theta = (\widehat y_i - \widehat z_i  \otimes \widehat \theta) (1+\delta_i^{(y)}), \qquad \mbox{ where } \qquad |\delta_i^{(y)}| \leq \eps_M.
\]
Hence, there are $n \times n$ diagonal matrices $\Theta_3$ and $\Theta_4$ such that 
\[
\widehat z \otimes \widehat \theta = \Theta_3 \widehat z  \widehat \theta, \qquad \mbox{ and } \qquad
\widehat x = \widehat y \ominus  \Theta_3 \widehat z  \widehat \theta = \Theta_4 \Big(\widehat y - \Theta_3 \widehat z  \widehat \theta \Big).
\]
As a result
\begin{equation}
\label{eq:z_theta}
\widehat z  \widehat \theta = \Theta_3^{-1} \widehat y - \Theta_3^{-1} \Theta_4^{-1} \widehat x
\end{equation}

Next, from the backward stability of solver for $Az=u$ we have
\begin{equation}
\label{eq:nrmBackStab_Azu}
(A+\Delta_2 A) \widehat z  = u + \Delta u, \quad \|\Delta_2 A\| \leq \CS \eps_M \|A\|,
\qquad
\|\Delta u\| \le \CS \eps_M \|u\|.
\end{equation}
Multiplying by $\widehat \theta$ yields
\[
(A+\Delta_2 A) \widehat z  \widehat \theta = (u + \Delta u) \widehat \theta,
\]
which by~\eqref{eq:z_theta} gives
\[
(A+\Delta_2 A) \Big(\Theta_3^{-1} \widehat y - \Theta_3^{-1} \Theta_4^{-1} \widehat x \Big)= (u + \Delta u) \widehat \theta,
\]
Hence
\begin{equation}
\label{eq:tempor1}
(A+\Delta_2 A) \Theta_3^{-1} \Theta_4^{-1} \widehat x  + (u + \Delta u) \widehat \theta = (A+\Delta_2 A) \Theta_3^{-1} \widehat y ,
\end{equation}

On the other hand, a bit of algebraic manipulations gives
\[
0 = (A + \Delta_1 A) \widehat y + (\Delta_2 A - \Delta_1 A) \widehat y - (A+ \Delta_2 A) \widehat y
\]
Adding the right-hand side of~\eqref{eq:tempor1} to both sides gives
\[
(A+\Delta_2 A) \Theta_3^{-1} \widehat y = (A + \Delta_1 A) \widehat y  + (\Delta_2 A - \Delta_1 A) \widehat y + (A+\Delta_2 A) \Theta_3^{-1} \widehat y  - (A+ \Delta_2 A) \widehat y
\]
i.e.,
\begin{equation}
\label{eq:tempor2}
(A+\Delta_2 A) \Theta_3^{-1} \widehat y = (A + \Delta_1 A) \widehat y  + (\Delta_2 A - \Delta_1 A) \widehat y + (A+\Delta_2 A) \Big( \Theta_3^{-1} - I \Big) \widehat y
\end{equation}
Now, from~\eqref{eq:nrmBackStab_Ayb}, we substitute $(A+\Delta_1 A) \widehat y  = b + \Delta_1 b$ in~\eqref{eq:tempor2} to get 
\[
(A+\Delta_2 A) \Theta_3^{-1} \widehat y = b + \Delta_1 b + \Big( (\Delta_2 A - \Delta_1 A) + (A+\Delta_2 A) ( \Theta_3^{-1} - I) \Big) \widehat y
\]
Together with~\eqref{eq:tempor1}, this yields
\begin{align}
(A+\Delta_2 A) \Theta_3^{-1} \Theta_4^{-1} \widehat x  + (u + \Delta u) \widehat \theta & = b + \Delta_1 b  \nonumber \\
& \hspace*{-1cm} + \Big( (\Delta_2 A - \Delta_1 A) + (A+\Delta_2 A) ( \Theta_3^{-1} - I) \Big) \widehat y. \label{eq:tempor3}
\end{align}
Next, we focus on the computed value of $\theta$. According to~\eqref{eq:theta_alpha_beta}, \(\widehat \theta\) satisfies
\[
\widehat \theta +  v^T\Theta_2 \widehat z \widehat \theta 
= v^T\Theta_1 \widehat y  \langle 2\rangle. 
\]
Using~\eqref{eq:z_theta}, we obtain
\[
\widehat \theta = v^T\Theta_2  \Theta_3^{-1} \Theta_4^{-1} \widehat x + v^T \Big( \Theta_1 \langle 2\rangle - \Theta_2 \Theta_3^{-1} \Big) \widehat y. 
\]
Defining
\begin{align}
   & (v+\Delta v)^T
    := v^T\Theta_2  \Theta_3^{-1} \Theta_4^{-1} =  v^T(I+D_z)\Theta_{3}^{-1}\Theta_4^{-1}, \label{eq:vDeltavDef}\\
&  \widetilde s^T    := v^T \Big( \Theta_1 \langle 2\rangle - \Theta_2 \Theta_3^{-1} \Big)  \nonumber
\end{align}
gives
\[
\widehat \theta = (v+ \Delta v)^T \widehat x +\widetilde s^T \widehat y , 
\]
which together with~\eqref{eq:tempor3} yields
\begin{align*}
\Big( (A+\Delta_2 A) & \Theta_3^{-1} \Theta_4^{-1}  + (u + \Delta u) (v+ \Delta v)^T \Big) \widehat x  = \\
& b + \Delta_1 b + \Big( (\Delta_2 A - \Delta_1 A) + (A+\Delta_2 A) ( \Theta_3^{-1} - I) - (u + \Delta u) \widetilde s^T \Big) \widehat y.
\end{align*}
Defining further 
\begin{align}
& A+\Delta A:= (A+\Delta_2 A) \Theta_3^{-1} \Theta_4^{-1} \label{eq:ADeltaADef}\\
& \hspace*{-0.2cm}  \Delta F := (\Delta_2 A - \Delta_1 A) + (A+\Delta_2 A) ( \Theta_3^{-1} - I) - (u + \Delta u) v^T \Big( \Theta_1 \langle 2\rangle - \Theta_2 \Theta_3^{-1} \Big)  \label{eq:sTDef}
\end{align}
yields~\eqref{eq:SM-back-bnd}. We have already found bounds for the norm of \(\Delta u\) and \(\Delta b\), where we set $\Delta b := \Delta_1 b$. It remains to bound $\|\Delta A\|$, $\| \Delta v\|$ and $\|\Delta F\|$. 

Starting from $\Delta A$, recall that $\Theta_3$ represents the backward error in the vector-scalar multiplication $\widehat z \otimes \widehat \theta$ and $\Theta_4$ represents the backward error in the vector subtraction $\widehat y \ominus (\widehat z \otimes  \widehat \theta)$. Therefore, we have $\Theta_3 = I+D_3$ and $\Theta_4 = I+D_4$ for some $n \times n$ diagonal matrices \(D_3\) and \(D_4\) satisfying
$\|D_3\| \leq \eps_M$ and $\|D_4\| \leq \eps_M$. We therefore have (see~\eqref{eq:Theta-inv-bnd})
\begin{equation}
\label{eq:theta3Bnd}
    \|\Theta_3^{-1}\| \leq \frac{1}{1-\eps_M},
    \qquad
    \|\Theta_4^{-1}\| \leq \frac{1}{1-\eps_M}.
\end{equation}
From~\eqref{eq:ADeltaADef} we see that
\[
    \Delta A
    =
    (A+\Delta_2 A)\Theta_3^{-1}\Theta_4^{-1} - A  =
    A(\Theta_3^{-1}\Theta_4^{-1}-I)
    +
    \Delta_2 A\,\Theta_3^{-1}\Theta_4^{-1}.
\]
Therefore,
\[
\begin{aligned}
    \|\Delta A\|
    &\leq
    \|A\|\,\|\Theta_3^{-1}\Theta_4^{-1}-I\|
    +
    \|\Delta_2 A\|\,\|\Theta_3^{-1}\|\,\|\Theta_4^{-1}\| .
\end{aligned}
\]
Moreover,
\[
    \|\Theta_3^{-1}\Theta_4^{-1}-I\|
    =
    \|\Theta_3^{-1}\Theta_4^{-1}
      -\Theta_3^{-1}
      +\Theta_3^{-1}-I\|  
    \leq
    \|\Theta_3^{-1}\|\,\|\Theta_4^{-1}-I\|
    +
    \|\Theta_3^{-1}-I\|.
\]
By~\eqref{eq:Theta-I-bnd} 
\begin{equation}
\label{eq:thetaInvs}
    \|\Theta_j^{-1}-I\|
    \leq
    \frac{\eps_M}{1-\eps_M} = \gamma_1,
    \qquad j=3,4,
\end{equation}
\begin{equation}
\label{eq:theta34nrm}
    \|\Theta_3^{-1}\Theta_4^{-1}-I\|
    \leq
    \frac{1}{1-\eps_M}\frac{\eps_M}{1-\eps_M}
    +
    \frac{\eps_M}{1-\eps_M}        
    =
    \frac{2\eps_M-\eps_M^2}{(1-\eps_M)^2}.
\end{equation}
Using also $\|\Delta_2 A\| \leq \CS \eps_M \|A\|$, we therefore have
\[
    \|\Delta A\| \leq
    \|A\|
    \frac{2\eps_M-\eps_M^2}{(1-\eps_M)^2}
    +
    \CS \eps_M \|A\|
    \frac{1}{(1-\eps_M)^2}  
    =
    \frac{(2+\CS)\eps_M-\eps_M^2}{(1-\eps_M)^2}\,\|A\|
\]
which gives~\eqref{eq:DeltABnd}.

We next show~\eqref{eq:DeltvBnd}. From~\eqref{eq:vDeltavDef} we have 
$  \Delta v^T  =    v^T\left((I+D_z)\Theta_3^{-1}\Theta_4^{-1}-I\right)$ and hence
\[
    \|\Delta v\|
    \leq
    \|v\|
    \left\|
    (I+D_z)\Theta_3^{-1}\Theta_4^{-1}-I
    \right\|.
\]
Now
\[
\begin{aligned}
    (I+D_z)\Theta_3^{-1}\Theta_4^{-1}-I
    &=
    \Theta_3^{-1}\Theta_4^{-1}-I
    +
    D_z\Theta_3^{-1}\Theta_4^{-1}.
\end{aligned}
\]
Therefore,
\[
\begin{aligned}
    \left\|
    (I+D_z)\Theta_3^{-1}\Theta_4^{-1}-I
    \right\|
    &\leq
    \|\Theta_3^{-1}\Theta_4^{-1}-I\|
    +
    \|D_z\|\,\|\Theta_3^{-1}\|\,\|\Theta_4^{-1}\|.
\end{aligned}
\]
Using previous bounds~\eqref{eq:theta34nrm},~\eqref{eq:theta3Bnd} and~\eqref{eq:D_yzBnd}, we get
\[
    \left\|
    (I+D_z)\Theta_3^{-1}\Theta_4^{-1}-I
    \right\|
    \leq
    \frac{2\eps_M-\eps_M^2}{(1-\eps_M)^2}
    +
    \frac{\gamma_n}{(1-\eps_M)^2}        =
    \frac{\gamma_n+2\eps_M-\eps_M^2}{(1-\eps_M)^2}.
\]
From
\[
    \gamma_n + 2\eps_M-\eps_M^2
    =
   n\eps_M+\mathcal O(\eps_M^2) + 2\eps_M+\mathcal O(\eps_M^2)
    =
    (n+2)\eps_M+\mathcal O(\eps_M^2)
\]
and $(1-\eps_M)^{-2}    =    1+2\eps_M+\mathcal O(\eps_M^2)$
we get
\[
    \frac{\gamma_n+2\eps_M-\eps_M^2}{(1-\eps_M)^2}
    =
    \Big((n+2)\eps_M+\mathcal O(\eps_M^2)\Big)
    \Big(1+2\eps_M+\mathcal O(\eps_M^2)\Big)  
    =
    (n+2)\eps_M+\mathcal O(\eps_M^2)
\]
which gives~\eqref{eq:DeltvBnd}.

Finally, we prove~\eqref{eq:T_Bnd}. Taking norms in~\eqref{eq:sTDef} gives
\[
\begin{aligned}
\hspace*{-0.2cm}
    \|\Delta F \|
    &\hspace*{-0.1cm} \leq
    \|\Delta_2 A-\Delta_1 A\|
    +
    \|(A+\Delta_2 A)(\Theta_3^{-1}-I)\|      
    +
    \|(u+\Delta u)v^T
    [
        \Theta_1\langle 2\rangle
        -
        \Theta_2\Theta_3^{-1}
    ]\| \\
    &\hspace*{-0.1cm} \leq
    \|\Delta_2 A\|
    +
    \|\Delta_1 A\|
    +
    \bigl(\|A\|+\|\Delta_2 A\|\bigr)
    \|\Theta_3^{-1}-I\|                    
    +
    \|u+\Delta u\|\,\|v\|\,
    \|\Theta_1\langle 2\rangle
    -
    \Theta_2\Theta_3^{-1}\|.
\end{aligned}
\]
In view of the backward stability of the solver with $A$, i.e.,~\eqref{eq:nrmBackStab_Ayb} and~\eqref{eq:nrmBackStab_Azu}, and using~\eqref{eq:thetaInvs} for $j=3$, we obtain
\[
\begin{aligned}
    \|\Delta F \|
    &\leq
    2\CS\eps_M\|A\|
    +
    (1+\CS\eps_M)\|A\|
    \frac{\eps_M}{1-\eps_M}                 \\
    &\hspace{2em}
    +
    (1+\CS \eps_M)\|u\|\,\|v\|\,
    \|\Theta_1\langle 2\rangle
    -
    \Theta_2\Theta_3^{-1}\|.
\end{aligned}
\]
It remains to bound the last term. We have
\[
    \|\Theta_1\langle 2\rangle   -    \Theta_2\Theta_3^{-1}\|
    = \|  \bigl(\Theta_1\langle 2\rangle-I\bigr)
    -
    \bigl(\Theta_2\Theta_3^{-1}-I\bigr) \| \leq
    \|\Theta_1\langle 2\rangle-I\|
    +
    \|\Theta_2\Theta_3^{-1}-I\|.
\]
From
\[
    \langle 2\rangle=(1+\delta_1)(1+\delta_2),
    \qquad
    |\delta_i|\leq \eps_M,
    \qquad i=1,2,
\]
we see that
$    |\langle 2\rangle|
    \leq
    (1+\eps_M)^2$, 
and $
    |\langle 2\rangle-1|
    =
    |\delta_1+\delta_2+\delta_1\delta_2|
    \leq
    2\eps_M+\eps_M^2$. Therefore, 
    \[
\begin{aligned}
    \|\Theta_1\langle 2\rangle-I\|
    &=
    \|(I+D_y)\langle 2\rangle-I\|   =
    \|D_y\langle 2\rangle
      +(\langle 2\rangle-1)I\|                 \\
    &\leq
    |\langle 2\rangle|\,\|D_y\|
    +
    |\langle 2\rangle-1|  \leq
    (1+\eps_M)^2\gamma_n
    +
    2\eps_M+\eps_M^2 .
\end{aligned}
\]
For the second term,
\[
\begin{aligned}
    \|\Theta_2\Theta_3^{-1}-I\|
    &=
    \|(I+D_z)\Theta_3^{-1}-I\|  =
    \|(\Theta_3^{-1}-I)+D_z\Theta_3^{-1}\|      \\
    &\leq
    \|\Theta_3^{-1}-I\|
    +
    \|D_z\|\,\|\Theta_3^{-1}\|                  \leq
    \frac{\eps_M}{1-\eps_M}
    +
    \frac{\gamma_n}{1-\eps_M} =
    \frac{\gamma_n+\eps_M}{1-\eps_M}.
\end{aligned}
\]
Hence
\[
\begin{aligned}
    \|\Theta_1\langle 2\rangle
    -
    \Theta_2\Theta_3^{-1}\|
    &\leq
    (1+\eps_M)^2\gamma_n
    +
    2\eps_M+\eps_M^2
    +
    \frac{\gamma_n+\eps_M}{1-\eps_M}.
\end{aligned}
\]
Substituting this into the previous estimate gives
\[
\begin{aligned}
    \|\Delta F \|
    &\leq
    \left(
        2\CS\eps_M
        +
        \frac{\eps_M}{1-\eps_M}
        +
        \frac{\CS\eps_M^2}{1-\eps_M}
    \right)\|A\|                                      \\
    &\hspace{2em}
    +
    (1+\CS \eps_M)
    \left(
        (1+\eps_M)^2\gamma_n
        +
        2\eps_M+\eps_M^2
        +
        \frac{\gamma_n+\eps_M}{1-\eps_M}
    \right)
    \|u\|\,\|v\|.
\end{aligned}
\]
Using $\frac{\eps_M}{1-\eps_M}
    =
    \eps_M+\mathcal O(\eps_M^2)$, again, $\gamma_n=n\eps_M+\mathcal O(\eps_M^2)$ and
\[
    (1+\eps_M)^2\gamma_n
    +
    2\eps_M+\eps_M^2
    +
    \frac{\gamma_n+\eps_M}{1-\eps_M}
    =
    2\gamma_n+3\eps_M
    +
    \mathcal O(\eps_M\gamma_n+\eps_M^2) = 
    (2n +3)\eps_M
    +
    \mathcal O(\eps_M^2)    
\]
yields
\[
    \|\Delta F \|
    \leq
    (2\CS+1)\eps_M\|A\|
    +
    (2n+3)\eps_M\|u\|\,\|v\|
    +
    \mathcal O(\eps_M^2)
    \bigl(\|A\|+\|u\|\,\|v\|\bigr).
\]
\end{proof}

Recall from the Rigal-Gaches theorem~\cite[p.~120]{Higham02} that any approximate solution $\widehat x$ of a linear system $Bx=b$ with nonsingular $B$ satisfies 
\begin{equation}
\label{eq:backErrB_def}
\omega(\widehat{x}) = \frac{\| b - B \widehat x\|}{\|B\|\, \|\widehat{x}\| + \|b\|}.
\end{equation}
We now use the previous result to establish an upper bound for the residual $r$, and the relative backward error of the SM solution.
\begin{theorem}   \label{thm:new-SM-BackErrBnd}
Assume linear systems $Ay=b$ and $Az=u$ are solved normwise backward stably. Then, the computed solution $\widehat x$ of SM in floating point arithmetic satisfies 
\begin{align}
\omega(\hat{x}) & \leq  \ 
 \eps_M (n+2 + \CS) \frac{\big( \| A\| + \|u\| \|v\| \big)\,\| \widehat x\| + \|b\|}{\|A+u v^{T}\|\, \|\widehat x\| + \|b\|} \nonumber \\ 
 & \quad + \eps_M (2n+3+2\CS) \frac{\big( \|A\| + \|u\| \|v\| \big)\,\|\widehat y\|}{\|A+u v^{T}\|\, \|\widehat x\| + \|b\|} + \mathcal{O}(\eps_M^2).
  \label{newBackErrBnd1:eq}
\end{align}
\end{theorem}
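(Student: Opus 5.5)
The plan is to start from the backward error representation~\eqref{eq:SM-back-bnd} of Theorem~\ref{thm:SM-back-bnd} and read off the residual with respect to $B := A+uv^T$. Expanding the perturbed matrix in~\eqref{eq:SM-back-bnd} gives
\[
r := b - (A+uv^T)\widehat x = \Big(\Delta A + \Delta u\, v^T + u\,\Delta v^T + \Delta u\,\Delta v^T\Big)\widehat x - \Delta b - \Delta F\,\widehat y .
\]
We then take norms and discard the second-order product $\Delta u\,\Delta v^T$, which contributes only $\mathcal O(\eps_M^2)\|u\|\|v\|\|\widehat x\|$. This yields
\[
\|r\| \le \big(\|\Delta A\| + \|\Delta u\|\|v\| + \|u\|\|\Delta v\|\big)\|\widehat x\| + \|\Delta b\| + \|\Delta F\|\,\|\widehat y\| + \mathcal O(\eps_M^2).
\]

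Next we insert the bounds of Theorem~\ref{thm:SM-back-bnd} term by term.
\begin{itemize}
\item The coefficient of $\|\widehat x\|$ is at most $\eps_M\big((2+\CS)\|A\| + \CS\|u\|\|v\| + (n+2)\|u\|\|v\|\big) + \mathcal O(\eps_M^2)$. Since each of $2+\CS$ and $n+2+\CS$ is at most $n+2+\CS$, this is bounded by $\eps_M(n+2+\CS)\big(\|A\|+\|u\|\|v\|\big)$.
\item Similarly, $\|\Delta b\|\le \CS\eps_M\|b\| \le (n+2+\CS)\eps_M\|b\|$, so $\|b\|$ can be absorbed into the first numerator with the same constant.
\item For the $\widehat y$ term, \eqref{eq:T_Bnd} gives $\|\Delta F\| \le \eps_M\big((2\CS+1)\|A\| + (2n+3)\|u\|\|v\|\big)+\mathcal O(\eps_M^2)$. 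Both $2\CS+1$ and $2n+3$ are at most $2n+3+2\CS$, giving $\eps_M(2n+3+2\CS)\big(\|A\|+\|u\|\|v\|\big)\|\widehat y\|$.
\end{itemize}
Dividing $\|r\|$ by $\|A+uv^T\|\,\|\widehat x\|+\|b\|$, as in the Rigal--Gaches formula~\eqref{eq:backErrB_def} with $B = A+uv^T$, gives exactly~\eqref{newBackErrBnd1:eq}.

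The main point requiring care is the $\mathcal O(\eps_M^2)$ bookkeeping. The neglected terms carry factors $\|A\|$, $\|u\|\|v\|$, $\|\widehat x\|$ and $\|\widehat y\|$, and these must be understood relative to the same denominator. I would state explicitly that the $\mathcal O(\eps_M^2)$ in~\eqref{newBackErrBnd1:eq} means $\eps_M^2$ times the same ratios appearing in the first-order terms, consistent with the convention already used in Theorem~\ref{thm:SM-back-bnd}.

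I would also check that the second-order remainders inherited from~\eqref{eq:DeltABnd}--\eqref{eq:T_Bnd} do not hide unbounded factors; they do not, since all are polynomial in $\eps_M$ with constants depending only on $n$ and $\CS$. Otherwise the argument is a direct triangle-inequality estimate, with the constants chosen loosely as the maxima above.
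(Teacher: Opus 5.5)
Your proposal is correct and follows essentially the same route as the paper. You express the residual $r=b-(A+uv^T)\widehat x$ via the representation~\eqref{eq:SM-back-bnd}, bound it term by term with the estimates of Theorem~\ref{thm:SM-back-bnd}, loosen the constants to $n+2+\CS$ and $2n+3+2\CS$ (absorbing $\CS\eps_M\|b\|$ into the first numerator), and divide by the Rigal--Gaches denominator~\eqref{eq:backErrB_def}; your remark on how to read the $\mathcal O(\eps_M^2)$ term only makes explicit what the paper leaves implicit.
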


\begin{proof}
Following~\eqref{eq:SM-back-bnd}, the SM residual $r := b - (A+uv^T)\hat x$ is
\[
r = \big( \Delta A + u\ \Delta v^T+\Delta u\ v^T+ \Delta u\ \Delta v^T \big) \widehat x - \Delta b - \Delta F\ \widehat y 
\]
Taking norms from both sides yields:
\begin{align*}
\| r \| & \leq \Big( \| \Delta A \| + \|u\|\ \|\Delta v\|+ \|\Delta u\| \|v\| + \|\Delta u\| \ \|\Delta v\| \Big) \|\widehat x\| + \|\Delta b\| +\| \Delta F\|\ \|\widehat y\| \\
& \leq \eps_M \Big(   (2+\CS)  \|A\| +    (n+2) \|u\|\ \|v\| + \CS \|u\| \|v\| \Big) \|\widehat x\| + \CS \eps_M \|b\| \\
& \quad + 
\eps_M \Big(
    (2\CS+1)\|A\|
    +
    (2n+3)\|u\|\,\|v\|  \Big) \ \|\widehat y\| 
+ \mathcal O(\eps_M^2)\\
&\leq \eps_M (n+2+\CS) \Big(  \| A\| + \|u\| \|v\| \Big)\,\| \widehat x\| \, +\, \eps_M\, \CS\, \|b\| \, \nonumber \\[4pt]
& \ \ \ +\, \eps_M  (2n+3+2\CS) \Big( \|A\| + \|u\| \|v\| \Big)\,\|\widehat y\| + \mathcal{O}(\eps_M^2)
 \label{newResBnd1:eq}
\end{align*}
Formula~\eqref{eq:backErrB_def} gives~\eqref{newBackErrBnd1:eq}.
\end{proof}

\begin{corollary} \normalfont
Using the bound 
\[
\| \widehat y\| \leq \frac{\|A^{-1}\| \|b\|}{1 - c_1 \eps_M \kappa(A)}
\]
(see Formula (3.13) in~\cite{SM25}) gives the following bound on the residual 
\begin{align}
\| r \| \leq  &\, \eps_M (n+2+\CS) \Big( \| A\| + \|u\| \|v\| \Big)\,\| \widehat x\| \, +\, \eps_M\, \CS\, \|b\| \, \nonumber \\[4pt]
 & \ \ \ +\, \eps_M \frac{2n+3+2\CS}{1 - c_1 \eps_M \kappa(A)} \Big( \|A\| + \|u\| \|v\| \Big)\,\|A^{-1}\| \, \|b\| + \mathcal{O}(\eps_M^2).
 \label{newResBnd:eq}
\end{align}
Similarly, the relative backward error of the SM solution $\widehat{x}$ satisfies the following bound
\begin{align}
\omega(\widehat{x}) & \leq  \ 
 \eps_M (n+2+\CS) \frac{\Big( \| A\| + \|u\| \|v\| \Big)\,\| \widehat x\| + \|b\|}{\|A+u v^{T}\|\, \|\widehat{x}\| + \|b\|} \nonumber \\ 
 & \ + \eps_M \frac{2n+3+2\CS}{1 - c_1 \eps_M \kappa(A)} \frac{\Big( \|A\| + \|u\| \|v\| \Big)\,\|A^{-1}\| \, \|b\|}{\|A+u v^{T}\|\, \|\widehat{x}\| + \|b\|} + \mathcal{O}(\eps_M^2).
  \label{newBackErrBnd:eq}
\end{align}
While the first term on the right-hand side can typically be expected to be $\mathcal{O}(\eps_M)$, the second term is $\mathcal{O}(\eps_M \kappa(A))$ as Higham conjectured. The bound also suggests that the backward error could be large when $\| A + uv^T\| \ll \| A\| + \|u\| \|v\|$.
\end{corollary}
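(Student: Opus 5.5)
The corollary follows from Theorem~\ref{thm:new-SM-BackErrBnd} by one substitution, so I would not return to the floating-point analysis. The natural starting point is the last displayed residual estimate in the proof of Theorem~\ref{thm:new-SM-BackErrBnd}:
\[
\|r\| \le \eps_M (n+2+\CS)\big(\|A\|+\|u\|\,\|v\|\big)\|\widehat x\| + \eps_M\CS\|b\| + \eps_M(2n+3+2\CS)\big(\|A\|+\|u\|\,\|v\|\big)\|\widehat y\| + \mathcal O(\eps_M^2).
\]
Only the last term involves $\widehat y$. I would replace $\|\widehat y\|$ there by the quoted bound $\|A^{-1}\|\,\|b\|/(1-c_1\eps_M\kappa(A))$. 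This immediately gives \eqref{newResBnd:eq}.

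To justify that bound, I would recall where it comes from. By \eqref{eq:nrmBackStab_Ayb}, $\widehat y=(A+\Delta_1A)^{-1}(b+\Delta_1 b)$. Write $A+\Delta_1A=A(I+A^{-1}\Delta_1A)$ and note that $\|A^{-1}\Delta_1A\|\le \CS\eps_M\kappa(A)<1$ under the implicit assumption $c_1\eps_M\kappa(A)<1$. A Neumann-series argument then gives
\[
\|\widehat y\|\le \frac{\|A^{-1}\|(1+\CS\eps_M)\|b\|}{1-\CS\eps_M\kappa(A)}.
\]
The factor $1+\CS\eps_M$ only affects the $\mathcal O(\eps_M^2)$ remainder once multiplied by the leading $\eps_M$, and $c_1$ may be taken to be $\CS$. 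Alternatively, I would simply cite (3.13) of \cite{SM25}, as the statement does.

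For the backward error bound \eqref{newBackErrBnd:eq}, I would divide the residual bound by $\|A+uv^T\|\,\|\widehat x\|+\|b\|$, as in the Rigal--Gaches formula \eqref{eq:backErrB_def}. The term $\eps_M\CS\|b\|$ is absorbed into the first fraction, since $\CS\le n+2+\CS$, which makes its numerator $(\|A\|+\|u\|\|v\|)\|\widehat x\|+\|b\|$. The quotient of the $\mathcal O(\eps_M^2)$ term stays $\mathcal O(\eps_M^2)$, provided the same convention as in Theorem~\ref{thm:new-SM-BackErrBnd} is used, where the hidden constants carry the problem norms.

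The only point needing care is the hypothesis $c_1\eps_M\kappa(A)<1$, which must be stated so that the denominator is positive. Everything else is bookkeeping.
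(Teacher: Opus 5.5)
Your proposal is correct and matches the paper's route: the paper gives no separate proof, and the corollary is obtained exactly as you describe. You substitute the cited bound on $\|\widehat y\|$ into the residual estimate from the proof of Theorem~\ref{thm:new-SM-BackErrBnd}, then divide by the Rigal--Gaches denominator, absorbing $\eps_M\CS\|b\|$ into the first fraction because $\CS\le n+2+\CS$. Your Neumann-series derivation of the $\|\widehat y\|$ bound goes slightly beyond the paper's bare citation of \cite{SM25}, and it is valid: with $c_1=\CS$ the extra factor $1+\CS\eps_M$ only contributes $\mathcal O(\eps_M^2)$, and taking $c_1=2\CS$ would make the bound hold exactly.
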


\subsection{Forward error bounds for the SM solution}
Using Theorem~\ref{thm:SM-back-bnd}, we now establish a forward error bound for the SM solution. It implies that if  \(B:= A + uv^T\) is well-conditioned, then the accuracy of the SM computed solution is determined by the norm of $\widehat y$.
\begin{theorem}
\label{thm:SM_FerrBnd}
Assume linear systems $Ay=b$ and $Az=u$ are solved normwise backward stably. Define
\begin{align*}
& \widetilde c_1 := (n+\CS + 2) \frac{\|A\| + \|u\| \|v\|}{\|B\|} \qquad \widetilde c_2 := (2n+2\CS+3) \frac{\|A\| + \|u\| \|v\|}{\|B\|}\\
& c_2 := \frac{\widetilde c_2 }{1-\eps_M (2\widetilde c_1 + \CS) \kappa(B)}
\end{align*}
and assume that $\eps_M (2\widetilde c_1 + \CS) \kappa(B)<1$. If $\widehat x$ is computed with SM in floating point arithmetic, then
\begin{equation}\label{eq:SM_FerrBnd}
\| \widehat x-x \|
\le c_2 \eps_M \kappa(B) \big( \|\widehat x\| + \|\widehat y\| \big).
\end{equation}
\end{theorem}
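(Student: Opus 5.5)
We start from the backward error representation \eqref{eq:SM-back-bnd} of Theorem~\ref{thm:SM-back-bnd} and turn it into a residual equation for the exact matrix $B=A+uv^T$. Writing $E:=\Delta A+u\,\Delta v^T+\Delta u\,v^T+\Delta u\,\Delta v^T$, \eqref{eq:SM-back-bnd} reads $(B+E)\widehat x=b+\Delta b+\Delta F\,\widehat y$. Since $Bx=b$, subtracting gives
\[
B(\widehat x-x)=-E\widehat x+\Delta b+\Delta F\,\widehat y ,
\]
so that
\[
\|\widehat x-x\|\le \|B^{-1}\|\big(\|E\|\,\|\widehat x\|+\|\Delta b\|+\|\Delta F\|\,\|\widehat y\|\big).
\]
As in the proof of Theorem~\ref{thm:new-SM-BackErrBnd}, the bounds \eqref{eq:DeltABnd}--\eqref{eq:T_Bnd} give $\|E\|\le \eps_M(n+2+\CS)(\|A\|+\|u\|\|v\|)=\eps_M\widetilde c_1\|B\|$ and $\|\Delta F\|\le \eps_M\widetilde c_2\|B\|$, both to first order. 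We also have $\|\Delta b\|\le \CS\eps_M\|b\|$.

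The one term not yet of the form $\|\widehat x\|$ or $\|\widehat y\|$ is $\|b\|$. We eliminate it using $b=Bx$, which gives $\|b\|\le\|B\|\,\|x\|\le \|B\|(\|\widehat x\|+\|\widehat x-x\|)$. Substituting yields
\[
\|\widehat x-x\|\le \eps_M\kappa(B)\Big((\widetilde c_1+\CS)\|\widehat x\|+\CS\|\widehat x-x\|+\widetilde c_2\|\widehat y\|\Big).
\]
Here the $\|\widehat x-x\|$ term appears on the right. To keep the denominator uniform with the statement, and to absorb the $\mathcal O(\eps_M^2)$ slack from the bounds of Theorem~\ref{thm:SM-back-bnd}, we bound the coefficient of $\|\widehat x-x\|$ generously by $\eps_M(2\widetilde c_1+\CS)\kappa(B)$. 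We then move that term to the left-hand side. This step is where the hypothesis $\eps_M(2\widetilde c_1+\CS)\kappa(B)<1$ is used, and dividing by $1-\eps_M(2\widetilde c_1+\CS)\kappa(B)$ produces the factor appearing in $c_2$.

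Finally we compare coefficients. The coefficient of $\|\widehat y\|$ is $\widetilde c_2$, and the coefficient of $\|\widehat x\|$ is $\widetilde c_1+\CS$. Since $\|A\|+\|u\|\|v\|\ge\|B\|$, we have $\widetilde c_1\ge n+\CS+2$, and therefore $\widetilde c_1+\CS\le \widetilde c_2$. Both coefficients can thus be replaced by $\widetilde c_2$, giving \eqref{eq:SM_FerrBnd}.

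The main obstacle is the bookkeeping in the absorption step. The $\|b\|$ term must be handled so that the resulting coefficients match $2\widetilde c_1+\CS$ and $\widetilde c_2$ exactly. The $\mathcal O(\eps_M^2)$ terms must either be absorbed this way or consistently dropped, as the statement implicitly does to first order.
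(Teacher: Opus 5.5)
Your proof is correct, but it takes a different route from the paper's.

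\textbf{The paper's route.} The paper subtracts $\Delta B\,x$ to obtain $(B+\Delta B)e=\Delta b+\Delta F\,\widehat y-\Delta B\,x$, with $e:=\widehat x-x$. It then inverts $I+B^{-1}\Delta B$ by a Neumann series, which needs $\eps_M\widetilde c_1\kappa(B)<1$. This yields the intermediate bound \eqref{SM-abs-ferr-exact:eq} in terms of the exact $\|x\|$, with coefficients $(\CS+\widetilde c_1)/(1-\eps_M\widetilde c_1\kappa(B))$ and $\widetilde c_2/(1-\eps_M\widetilde c_1\kappa(B))$. Only then does it substitute $\|x\|\le\|\widehat x\|+\|e\|$. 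The denominator $1-\eps_M(2\widetilde c_1+\CS)\kappa(B)$ comes out of exactly that last absorption.

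\textbf{Your route.} You let the perturbation (your $E$, the paper's $\Delta B$) act on the computed $\widehat x$, writing $Be=-E\widehat x+\Delta b+\Delta F\,\widehat y$. So you never need to invert $B+\Delta B$, and the only term requiring absorption is $\CS\eps_M\|b\|\le \CS\eps_M\|B\|(\|\widehat x\|+\|e\|)$. Carried through honestly, this gives the sharper estimate
\[
\|e\|\le \frac{\eps_M\kappa(B)}{1-\eps_M\CS\kappa(B)}\big((\widetilde c_1+\CS)\|\widehat x\|+\widetilde c_2\|\widehat y\|\big)
\]
under the weaker hypothesis $\eps_M\CS\kappa(B)<1$. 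The stated theorem then follows by monotonicity of $a\mapsto 1/(1-a)$. So the coefficients need not ``match $2\widetilde c_1+\CS$ exactly''; any larger denominator coefficient is harmless. You also justify $\widetilde c_1+\CS\le\widetilde c_2$ via $\|A\|+\|u\|\|v\|\ge\|B\|$. This fills in a step the paper merely asserts (its ``$c_1<c_2$'').

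\textbf{What each approach buys.} The paper's route produces the intermediate bound \eqref{SM-abs-ferr-exact:eq} in terms of $\|x\|$, which it reuses for the relative error bounds \eqref{rel-ferr-SM:eq}--\eqref{sm-ferr-bnd-option3:eq}. Yours is shorter and gives a tighter bound with a weaker condition on $\kappa(B)$.

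\textbf{One caveat.} Your remark that enlarging the coefficient to $2\widetilde c_1+\CS$ ``absorbs the $\mathcal O(\eps_M^2)$ slack'' is neither verified nor needed. The second-order terms sit in the coefficients of $\|\widehat x\|$ and $\|\widehat y\|$, not of $\|e\|$. Like the paper's own proof, the result is a first-order statement, and you should simply say that second-order terms are dropped.
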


\begin{proof}
As the assumptions of Theorem~\ref{thm:SM-back-bnd} are satisfied, we substitute $b = (A+uv^T) x$ into~\eqref{eq:SM-back-bnd} to get
\[
\Big( (A+\Delta A) + (u+\Delta u) (v+\Delta v)^T \Big) \widehat x = ( A + uv^T) x + \Delta b + \Delta F\ \widehat y.
\]
Defining $e:= \widehat x - x$, $B:= A+uv^T$ and $\Delta B:= \Delta A + u\ \Delta v^T+\Delta u\ v^T +\Delta u\ \Delta v^T$ and subtracting $\Delta B\ x$ from both sides yield
\[
(B + \Delta B) e =  \Delta b + \Delta F\ \widehat y - \Delta B\ x 
\]
Multiplying by $B^{-1}$ from the left gives
\[
\big( I + B^{-1}  \Delta B \big) e =   B^{-1} \big( \Delta b + \Delta F\ \widehat y - \Delta B\ x \big)
\]
Now set $E:=B^{-1}\Delta B$. Then
\begin{equation}
\label{eq:IplusEe}
(I+E)e = B^{-1}\bigl(\Delta b + \Delta F\ \widehat y - \Delta B\ x \bigr).
\end{equation}
The definition of $\widetilde c_1$ guarantees that $\|\Delta B\| \le \eps_M \widetilde c_1 \|B\|$; see Lemma~\ref{lem:C_B_prime} in Appendix.
It follows that
\[
\|E\|
\le
\|B^{-1}\| \|\Delta B\|
\le
\eps_M \widetilde c_1 \| B^{-1}\| \| B\|
=
\eps_M \widetilde c_1 \kappa(B) < 1
\]
where the last inequality holds by assumption. Thus \(I+E\) is nonsingular, and the Neumann-series bound gives
\[
\|(I+E)^{-1}\|
\le
\frac{1}{1-\|E\|}
=
\frac{1}{1-\|B^{-1}\Delta B\|}.
\]
Hence, multiplying both sides of~\eqref{eq:IplusEe} by $(I+E)^{-1}$, taking norms, using the bounds on $\|\Delta b\|$ and $\|\Delta F\|$ from Theorem~\ref{thm:SM-back-bnd}, and $b = Bx$ give
\begin{align}
\|e\| &\leq \| (I+E)^{-1}\| \|B^{-1}\| \Big( \| \Delta b\| + \| \Delta F\| \ \| \widehat y\| + \|\Delta B\| \ \|x \| \Big) \nonumber\\
 & \leq \frac{\|B^{-1}\|}{1-\|B^{-1}\Delta B\|}  \Big( \| \Delta b\| + \| \Delta F\| \ \| \widehat y\| + \|\Delta B\| \ \|x \| \Big) \nonumber\\
  & \leq  \frac{\eps_M \|B^{-1}\|}{1-\|B^{-1}\Delta B\|}  \Bigg(  \CS \|b\|  + \Big(
    (2\CS+1)\|A\| +   (2n+3)\|u\|\,\|v\|  \Big)\ \| \widehat y\| + \widetilde c_1 \|B\| \ \|x \| \Bigg) \nonumber\\
  & \leq  \frac{\eps_M \|B^{-1}\|}{1-\|B^{-1}\Delta B\|}  \Bigg(  
  \CS \|B x\|  +   (2n+3+2\CS) \Big( (\|A\| + \|u\|\,\|v\|)  \Big)\ \| \widehat y\| + \widetilde c_1 \|B\| \ \|x \| \Bigg) \label{ferr_SM_proofStep:eq}\\    
  & =  \frac{\eps_M \|B^{-1}\|}{1-\|B^{-1}\Delta B\|}  \Big(  \CS \|B\| \|x\|  + \widetilde c_1 \|B\| \|x \| +
\widetilde c_2  \|B\| \| \widehat y\|  \Big) \nonumber
\end{align}
where the last equality (up to second-order terms in $\eps_M$) uses the definition of $\widetilde c_2$ in the theorem statement. Since $\|B^{-1} \Delta B\| \le \eps_M \widetilde c_1 \kappa(B)$, we have
\[
\frac{1}{1- \|B^{-1} \Delta B\|} \le \frac{1}{1-\eps_M \widetilde c_1 \kappa(B)} .
\]
Therefore,
\begin{align}
\|e\| & \leq \eps_M \frac{ \|B^{-1}\|}{1-\eps_M \widetilde c_1 \kappa(B)}  \Big(  (\CS  + \widetilde c_1) \|B\| \|x\|  +
\widetilde c_2  \|B\| \| \widehat y\|  \Big)\nonumber \\
& = \eps_M \frac{ \CS  + \widetilde c_1 }{1-\eps_M \widetilde c_1 \kappa(B)}  \kappa(B) \|x\|  +
 \eps_M \frac{ \widetilde c_2 }{1-\eps_M \widetilde c_1 \kappa(B)}  
 \kappa(B) \| \widehat y\| \label{SM-abs-ferr-exact:eq}
\end{align}
The above bound involves the exact solution $x$ and is therefore not directly computable in general. To make it computable, we can use $x = \widehat x - e$ which gives $\|x\| \leq \|\widehat x\| + \|e\|$, resulting in
\begin{align*}
\big(  \frac{1- \eps_M \kappa(B) (2 \widetilde c_1  + \CS) }{1-\eps_M \widetilde c_1 \kappa(B)} \big) \|e\|& \leq  \eps_M \frac{ \CS  + \widetilde c_1 }{1-\eps_M \widetilde c_1 \kappa(B)}  \kappa(B) \|\widehat x\|  +
 \eps_M \frac{ \widetilde c_2 }{1-\eps_M \widetilde c_1 \kappa(B)}  
 \kappa(B) \| \widehat y\| 
\end{align*}
Thus, by defining $c_1 := \frac{\CS + \widetilde c_1 }{1-\eps_M (2\widetilde c_1 +\CS) \kappa(B)}$ and $c_2$ as in the statement of the theorem we obtain
\[
\| e \|
\le \eps_M c_1 \kappa(B)  \|\widehat x\| + \eps_M c_2 \kappa(B)  \|\widehat y\|
\]
which, with $c_1 < c_2$, gives~\eqref{eq:SM_FerrBnd}.
\end{proof}

In view of~\eqref{SM-abs-ferr-exact:eq}, one way to obtain a relative forward error bound is defining 
$\widecheck c_1 := \frac{ \CS  + \widetilde c_1 }{1-\eps_M \widetilde c_1 \kappa(B)}$ and $\widecheck c_2 := \frac{ \widetilde c_2 }{1-\eps_M \widetilde c_1 \kappa(B)}$
which yields
\[
\frac{\| \widehat x-x \|}{\|x\|}
\le \eps_M \kappa(B) \big( \widecheck c_1 + \widecheck c_2 \frac{\|\widehat y\|}{\|x\|} \big).
\]
From $\widecheck c_1 < \widecheck c_2$, we then obtain
\begin{equation}
\label{rel-ferr-SM:eq}
\frac{\| \widehat x-x \|}{\|x\|}
\le  \eps_M \ \kappa(B) \ \widecheck c_2\ \big(1 +  \frac{\|\widehat y\|}{\|x\|} \big).
\end{equation}
This could be made directly computable in different ways if we have a lower bound for $\|x\|_2$. From $\|b\|=\|Bx\|\leq \|B\| \|x\|$, we obtain our first option 
\begin{equation}
\label{SM_relative_ferr0:eq}
\frac{\| \widehat x-x \|}{\|x\|}
\le \eps_M \kappa(B) \widecheck c_2 \Big( 1 + \|B\| \frac{\|\widehat y\|}{\|b\|} \Big).
\end{equation}
Starting over from~\eqref{rel-ferr-SM:eq}, an a posteriori lower bound for $\|x\|$ which is typically sharper than the previous one can be obtained from the computed approximation \(\widehat x\). The residual $ r=b-B\widehat x$ gives $B(x-\widehat x)=r$. If we know (an upper bound for)
$
\|B^{-1}\| = \frac{1}{\sigma_{\min}(B)} := \sigma_{\min}^{-1}(B)
$
then
\[
    \|x-\widehat x\|_2
    \leq
    \|r\|_2\ \sigma_{\min}^{-1}(B).
\]
By the reverse triangle inequality, 
$  \|x\|
    \geq
    \|\widehat x\|-\|x-\widehat x\|$ which yields
$
    \|x\|
    \geq
    \|\widehat x\| - \|r\| \sigma_{\min}^{-1}(B)$. Assuming the right-hand side is positive, we obtain
\begin{equation}
\label{solution-norm-bound:eq}
       \frac{1}{ \|x\|}
    \leq
    \frac{1}{\|\widehat x\|- \|r\| \sigma_{\min}^{-1}(B)}
\end{equation}
resulting in the following computable forward error bound for the SM solution
\begin{equation}
\label{SM_relative_ferr:eq}
\frac{\|\widehat x-x \|}{\|x\|}
\le \eps_M \kappa(B) \widecheck c_2 \Big( 1+ \frac{\|\widehat y\|}{\|\widehat x\| - \|r\| \sigma_{\min}^{-1}(B)} \Big).
\end{equation}

A third option would be to modify the proof a bit as follows: Going back to~\eqref{ferr_SM_proofStep:eq}, we might keep $b$ (instead of $Bx$ in the first term) to obtain
\begin{align}
\frac{\|\widehat x-x \|}{\|x\|} & \leq
 \frac{\eps_M \|B^{-1}\|}{1-\eps_M \widetilde c_1 \kappa(B)}  \Bigg(  \CS \frac{\|b\|}{{\|x\|}}  + \Big(
    (2\CS+1)\|A\| +   (2n+3)\|u\|\,\|v\|  \Big)\ \frac{\| \widehat y\|}{\|x\|} + \widetilde c_1 \|B\| \Bigg) \nonumber\\
& \leq
 \frac{\eps_M \|B^{-1}\|}{1-\eps_M \widetilde c_1 \kappa(B)}  \Bigg(  \frac{\CS  \|b\| 
+  \Big(
    (2\CS+1)\|A\| +   (2n+3)\|u\|\,\|v\|  \Big) \| \widehat y\|
}{{\|\widehat x\| - \|r\| \sigma_{\min}^{-1}(B)}}  + \widetilde c_1 \|B\| \Bigg)    \nonumber\\
& =
 \frac{\eps_M }{1-\eps_M \widetilde c_1 \kappa(B)}  \Bigg( \frac{\CS  \|b\| 
+  \Big(
    (2\CS+1)\|A\| +   (2n+3)\|u\|\,\|v\|  \Big) \| \widehat y\|
}{\sigma_{\min}(B) (\|\widehat x\| - \|r\| \sigma_{\min}^{-1}(B))}  + \widetilde c_1 \kappa(B) \Bigg) \nonumber\\
& =
 \frac{\eps_M }{1-\eps_M \widetilde c_1 \kappa(B)}  \Bigg( \frac{\CS  \|b\| 
+  \Big(
    (2\CS+1)\|A\| +   (2n+3)\|u\|\,\|v\|  \Big) \| \widehat y\|
}{ \|\widehat x\| \sigma_{\min}(B) - \|r\| }  + \widetilde c_1 \kappa(B) \Bigg) \label{sm-ferr-bnd-option3:eq}
  \end{align}

\section{A modified Sherman-Morrison (MSM) method}
In this section we introduce our new algorithm for solving~\eqref{eq:maingoal}. We first explain how to derive MSM in connection to Govaerts' algorithms for solving bordered linear systems and then an alternative derivation based on splitting the right-hand side $b$ in ~\eqref{eq:maingoal}. 

Govaerts~\cite{Govaerts91} proposed three variants of block Gaussian elimination (both with and without iterative refinement) for solving general $(n+1) \times (n+1)$ bordered linear systems
\begin{equation}
\label{eq:bordered-general}
\begin{bmatrix}
A & b \\
c & d
\end{bmatrix}
\begin{bmatrix}
x \\
y
\end{bmatrix} =
\begin{bmatrix}
f \\
g
\end{bmatrix}
\end{equation}
where $b, c^T$ and $f$ are $n \times 1$ vectors and $d, y$ and $g$ are scalars. He also examined error bounds for the computed solutions. 

Govaerts calls his three variants of block Gaussian elimination~\cite{Govaerts91} BED (block elimination Doolittle), BEC (block elimination Crout) and BEM (block elimination mixed). BED corresponds to an LU-like factorization of the block matrix in~\eqref{eq:bordered-general} where the $L$-factor contains ones on the main diagonal and when ones are on the main diagonal of the $U$-factor the resulting algorithm is called BEC. The algorithms then proceed to solve~\eqref{eq:bordered-general} with substitution. BEM is more involved, but its starting point is to use BED to determine the unknown scalar $y$, which is then substituted into BEC to compute the unknown vector $x$. Rather than outlining BEM for solving~\eqref{eq:bordered-general}, let us first describe the connection between~\eqref{eq:bordered-general} and our perturbed linear system~\eqref{eq:maingoal}, and then explain how BEM adjusts to this specific problem.

Following an observation of Bindel~\cite{Bindel15}, the following special case of the above bordered linear system
\begin{equation}
\label{eq:bordered2}
\begin{bmatrix}
A & u \\
v^T & -1
\end{bmatrix}
\begin{bmatrix}
x \\
\theta
\end{bmatrix} =
\begin{bmatrix}
b \\
0
\end{bmatrix}
\end{equation}
corresponds to~\eqref{eq:maingoal} --- the scalar variable $\theta = v^T x$ serves as a convenient intermediate step in the computation and when substituted in the first equation $Ax+u\theta = b$ gives $(A+uv^T)x = b$. Let us point out right away that, in exact arithmetic, the quantity $\theta$ in~\eqnref{eq:bordered2} is equal to $\alpha/\beta$ as computed in Step 3 of  Alg.~\ref{SM:alg}. To see this, note that $x = y - \theta z$ in the SM formula and so
\[
v^T x = v^T y - \theta (v^T z) = \alpha - \theta (\beta - 1) = \theta.
\]

In Algorithm~\ref{BEM:alg}, we outline how BEM is specialized to the problem of solving~\eqnref{eq:bordered2}.
\begin{algorithm}[!h]
\caption{BEM (Block Elimination Mixed) \cite{Govaerts91} adjusted to solving the special bordered system~\eqnref{eq:bordered2}.}
\label{BEM:alg}
\begin{algorithmic}[1]
\STATE Compute $v^T A^{-1}$ by solving $f^T A = v^T$ for $f$. Equivalently, solve $A^T f = v$.
\STATE Compute $\beta_{\rm{BED}} = 1 + f^T u$.
\STATE Compute $\alpha_{\rm{BED}} = f^T b$, and let $\theta_{\rm{BED}} = \frac{\alpha_{\rm{BED}} }{\beta_{\rm{BED}} }$. 
\STATE Solve $Az= u$ for $z$.  
\STATE Compute $\beta_{\rm{BEC}} = 1 + v^T z$.
\STATE Compute $d := b - u\ \theta_{\rm{BED}}$.
\STATE Solve $A w = d$ for $w$.  
\STATE Compute $\theta_1 :=\frac{v^T w - \theta_{\rm{BED}}}{\beta_{\rm{BEC}}}$.  
\STATE Output $x = w - z \theta_1$.  
\end{algorithmic}
\end{algorithm}
Steps 1-3 compute the scalar component $\theta$ by BED on the basis of Govaerts' argument that initially, only $\theta$ is computed accurately. Note that $A^T$ is the coefficient of the linear system in these BED steps. Then, Steps 4-5 compute $z$ and a new $\beta := 1+v^T z$. The core of the algorithm is its self-correction strategy which begins at Step 6, where it forms the residual of the provisional initial approximation --- namely, scalar component $\theta$ as computed in Steps 1-3 and vector component equal to zero. More precisely, the first 3 steps obtain 
\[
\begin{bmatrix}
x_0\\
\theta_0
\end{bmatrix} :=
\begin{bmatrix}
0_{n \times 1} \\
\theta_{\rm{BED}}
\end{bmatrix} 
\]
as a provisional approximation to the solution of~\eqnref{eq:bordered2}. The corresponding residual is
\begin{equation}
\label{eq:bordered_res_BEM}
r = \begin{bmatrix}
b \\
0
\end{bmatrix} - 
\begin{bmatrix}
A & u \\
v^T & -1
\end{bmatrix}
\begin{bmatrix}
0 \\
\theta_{\rm{BED}}
\end{bmatrix}
=
\begin{bmatrix}
b - u \theta_{\rm{BED}} \\
\theta_{\rm{BED}}
\end{bmatrix}.
\end{equation}
BEM then uses a BEC correction step to solve (in Steps 8-9) the bordered linear system with the above residual as its right-hand side. This means that the correction $(\Delta x,\Delta\theta)$ satisfies
\begin{equation}
\label{eq:bordered_corr_BEM}
\begin{bmatrix}
A & u\\
v^T & -1
\end{bmatrix}
\begin{bmatrix}
\Delta x\\
\Delta\theta
\end{bmatrix}
=
\begin{bmatrix}
b-u\theta_{\rm BED}\\
\theta_{\rm BED}
\end{bmatrix}.
\end{equation}
Once solved, the final solution proposed by the algorithm will be 
\[
\begin{bmatrix}
x\\
\theta
\end{bmatrix} = 
\begin{bmatrix}
x_0\\
\theta_0
\end{bmatrix}
+
\begin{bmatrix}
\Delta x\\
\Delta\theta
\end{bmatrix} = 
\begin{bmatrix}
\Delta x\\
\theta_{\rm BED} + \Delta\theta
\end{bmatrix}
\]

Following our notation, we write $z=A^{-1}u$. Let also $d := b - u\ \theta_{\rm{BED}}$ and $w := A^{-1} d$. The first row of~\eqnref{eq:bordered_corr_BEM} is $A \Delta x + u \Delta\theta = d$ which gives
\[
\Delta x=w-z\Delta\theta.
\]
Substituting this into the second row of~\eqnref{eq:bordered_corr_BEM}, i.e., into $v^T \Delta x - \theta_{\rm{BED}} = \Delta\theta$ yields
\[
v^T w - v^T z \Delta\theta - \theta_{\rm{BED}} = \Delta\theta
\]
and so
\[
\Delta\theta=\frac{v^Tw-\theta_{\rm BED}}{1+v^Tz}.
\]
This is the correction to the scalar variable in the bordered system, i.e., the quantity computed in Step 8. The final solution is
\[
x=\Delta x=w-z\Delta\theta.
\]
as computed in Step 9.

\subsection{Modified Sherman-Morrison (MSM)}
We now slightly modify Alg.~\ref{BEM:alg} to introduce MSM in Alg.~\ref{MSM:alg}. As outlined above, Alg.~\ref{BEM:alg} uses solvers with both $A$ and $A^T$, but in MSM we will solely use solvers with $A$. In particular we compute $\theta$, not like Steps 1-3 of Alg.~\ref{BEM:alg} with a solver for $A^T$, but rather with a solver for $A$. Govaerts' justification of using $A^T$ for computing $\theta$ comes from his observation that when a preconditioned conjugate gradient method is used as a general black-box solver for linear systems, the computed value of $\theta$ was more accurate than the quantity obtained with $A$ (see the first row in \cite[Tab. 1]{Govaerts91}). Govaerts explains \cite[sec. 3]{Govaerts91} that Steps 1-3 could be replaced by any method that produces the scalar unknown $\theta$ accurately. As we use (sparse) Gaussian elimination, instead of computing $\theta = v^T A^{-1} u$ with a solver for $A^T$, we compute it with a solver for $A$ which will be required in other steps of the algorithm as well. 

\begin{algorithm}[!h]
\caption{MSM: Modified Sherman-Morrison formula for solving $(A+uv^T)x=b$.}
\label{MSM:alg}
\begin{algorithmic}[1]
\STATE Solve $Ay = b$ for $y$. 
\STATE Solve $Az= u$ for $z$.  
\STATE Compute $\alpha = v^T y$, the capacitance $\beta = 1+v^T z$, and $\theta = \alpha / \beta$. 
\STATE Compute the new right-hand side $d = b - u\ \theta$.
\STATE Solve $A w = d$ for $w$.  
\STATE Compute $\theta_1 :=(v^T w - \theta) / \beta$.
\STATE Compute the correction vector $p :=z \theta_1$.
\STATE Output $x = w - p$.  
\end{algorithmic}
\end{algorithm}

Starting from $(A+uv^T)x = Ax +u v^Tx$, we treat $\theta = v^T x$ as a second unknown, rewriting the problem as $A x + u \theta = b$. The quantity $\theta$ computed in Step 3 of Alg.~\ref{MSM:alg} is exactly how it was computed in the standard SM; see Step 3 in Alg.~\ref{SM:alg}. Denoting it with $\theta_{\rm{SM}}$, we then take $(x, \theta) \approx (0, \theta_{\rm{SM}})$ as an approximate solution to find the residual
\[
d = b - A 0 - u \theta_{\rm{SM}} = b - u \theta_{\rm{SM}}
\]
which is then used to find the pair $(\delta x, \delta \theta)$ satisfying the pair of correction equations
\begin{equation}
\label{MSM_equations:eq}
A  \delta x + u \delta \theta = d,\qquad v^T \delta x - \delta \theta = \theta_{\rm{SM}}
\end{equation}
such that the final solution pair is obtained as
\[
(x, \theta) = (0, \theta_{\rm{SM}}) + (\delta x, \delta \theta) = (\delta x, \ \theta_{\rm{SM}} + \delta \theta)
\]
The first equation in~\eqnref{MSM_equations:eq} gives $\delta x= A^{-1} d - A^{-1} u \delta\theta$. Denoting $w := A^{-1} d$ and using $z = A^{-1}u$ from the standard SM, this yields
\[
\delta x= w - z \delta\theta.
\]
Substituting this into the second correction equation i.e., into $v^T \delta x - \theta_{\rm{SM}} = \delta\theta$ yields
\[
v^T w - v^T z \delta\theta - \theta_{\rm{SM}} = \delta\theta
\]
and so
\[
\delta\theta=\frac{v^Tw-\theta_{\rm SM}}{1+v^Tz}.
\]
This is the quantity computed in Step 6\footnote{In Alg.~\ref{MSM:alg}, we use $\theta_1$ rather than $\delta \theta$ or $\Delta \theta$, to avoid clashing with the notation of the next section, where we bound the rounding errors in $\theta$ and $\theta_1$ themselves.} of Alg.~\ref{MSM:alg}. The corrected solution $x$ is then 
\[
x = \delta x = w - z \ \delta \theta
\]
which is computed in Steps 7-8.

\subsection{An alternative way to think about MSM}
Solving $(A+uv^T)x=b$ by SM, i.e., by
\[
x = (A+uv^T)^{-1} b = A^{-1}b - A^{-1}u \frac{v^T\! A^{-1}b}{1 + v^T\!A^{-1}u} =: y - z \theta
\]
presents $x$ as a perturbation of $y$ in the direction of $z$, and as $z = A^{-1}u$, we could see the role of $u$ in the formula also as ``directional''. With this in mind, let us split $b$ into two pieces $b = d+ f$ and solve $(A+uv^T)x= d + f$ by linearity 
\[
x = (A+uv^T)^{-1} d + (A+uv^T)^{-1} f
\]

One could then make many choices for $d$ and $f$. Suppose we already have an approximation $\widehat \theta \approx \theta = \frac{v^T y}{1+v^T z}$ and take $d := b - u \widehat \theta$ and $f := u \widehat \theta$. As we see below, this choice of $d$ and $f$ simplifies the linear system solves. 
Applying standard SM to the first system $(A+uv^T) x_d = d$, i.e.,
\[
x_d = A^{-1} d - A^{-1}u(1 + v^T\!A^{-1}u)^{-1}v^T\! A^{-1}d =: w - z \frac{v^T w}{1+v^T z},
\]
we carry out the following steps:
\begin{enumerate}
\item Compute $d := b - u \widehat \theta$.
\item Solve $A w = d$ for $w$.
\item Reuse $z$ if already available, otherwise solve $Az = u$ to find $z$.
\item Compute $\alpha_{d} := v^T w$. 
\item Reuse $\beta$ if already available, otherwise compute $\beta=1+v^T z$. 
\item Compute $\theta_{d} := \alpha_{d} / \beta$.
\item Compute $x_{d} = w - z \theta_d$.
\end{enumerate}

Next, we inspect how SM works for the second problem $x_f = (A+uv^T)^{-1} u \widehat \theta$. As the right-hand side vector $u \widehat \theta$ is essentially already present in the coefficient matrix as well, we could expect some simplifications. Indeed, we have
\begin{align*}
x_f & = \Big(A^{-1} u - A^{-1}u(1 + v^T\!A^{-1}u)^{-1}v^T\! A^{-1}u \Big) \widehat \theta =: \big( z- z \frac{v^T z }{1+v^T z} \big) \widehat \theta \\
& = \big( \frac{z (1+v^T z) -z v^T z}{1+v^T z}\big)\widehat \theta = \frac{z}{1+v^T z} \widehat \theta = z \frac{\widehat \theta}{\beta}
\end{align*}
which shows no extra work is involved in solving the second linear system if $z$ is already available. Putting everything together yields
\[
x = x_d + x_f = (w - z \theta_d) + z \frac{\widehat \theta}{\beta} = w - z \Big( \theta_d - \frac{\widehat \theta}{\beta} \Big) =
w - z \Big( \frac{\alpha_{d}}{\beta} - \frac{\widehat \theta}{\beta} \Big) 
= w - z \Big( \frac{v^T w -\widehat \theta}{\beta} \Big) 
\]
whose form 
\[
x = w - z \theta_{1} \qquad \mbox{ where }\qquad \theta_{1} := \frac{v^T w -\widehat \theta}{\beta}
\]
resembles that of the standard SM applied to the original problem. From among the seven steps listed above, we can avoid the last two and instead perform the following:
 \begin{enumerate}[start=6]
\item Compute $\theta_{1} := \frac{\alpha_{d} - \widehat \theta}{\beta} = \frac{v^T w - \widehat \theta}{\beta} $.
\item Compute the correction vector $p := z \theta_1$.
\item Compute $x = w - p$
\end{enumerate}
which is another way to derive Alg.~\ref{MSM:alg}.

\section{Error bounds for MSM}
In this section we establish backward and forward error bounds for Alg.~\ref{MSM:alg}. We will discuss and illustrate these bounds in the sequel.

\subsection{Backward error of MSM}
We begin with a backward error representation for MSM which is the foundation of our subsequent backward and forward error bounds.
\begin{theorem}
\label{MSM-back-err:thm}
Assuming $\widehat\beta\neq 0$, the computed solution $\widehat x$ of MSM satisfies 
\begin{equation}
    \left(A+\Delta A+(u+\Delta u)(v+\Delta v)^T\right)\widehat x 
    =
    b+\Delta b + \Delta G \ \widehat p + \Delta g
\label{eq:BE}
\end{equation}
where 
\begin{align}
 &   \|\Delta A\|
    \leq
    \big(        (1+\CS)\eps_M + \mathcal O (\eps_M^2)    \big) \|A\|
\label{eq:DeltaA-bound}\\
&    \|\Delta u\|
    \leq
   \big(2 \eps_M + \mathcal O(\eps_M^2) \big) \|u\|
\label{eq:Deltau-bound}\\
&  \|\Delta v\|
    \leq
       \big((n+1) \eps_M + \mathcal O(\eps_M^2) \big)\|v\|
\label{eq:Deltav-bound}\\
&    \|\Delta b\|    \leq  \eps_M\|b\|
\label{eq:Deltab-bound-raw}\\
&  \|\Delta G\| \leq \big(
    (6+2\CS) \eps_M + \mathcal O(\eps_M^2)
\big)     \|A\|                                           
+
\big(
(2n+4) \eps_M + \mathcal O(\eps_M^2) \big) \|u\|\,\|v\|
\label{eq:DeltaG-bound2}    \\
&    \|\Delta g\| \leq 
\CS \eps_M \Big( \|b\| + \|u\| (|\widehat \theta| + |\widehat \theta_1|) \Big) + \mathcal O (\eps_M^2)
\label{eq:Delta-g-bound}
\end{align}
and $\widehat p$ is the correction vector computed in Step 7, and $\widehat \theta$ and $\widehat \theta_1$ are the scalars computed in Steps 3 and 6 of the algorithm, all in floating point arithmetic.
\end{theorem}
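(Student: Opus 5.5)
The plan is to follow the proof of Theorem~\ref{thm:SM-back-bnd}: write every floating point operation of Alg.~\ref{MSM:alg} with diagonal matrices $\Theta_j=I+D_j$ and counters $\langle k\rangle$, then eliminate the intermediate quantities $\widehat y,\widehat z,\widehat w,\widehat\theta,\widehat\theta_1$. What remains should be a relation of the form~\eqref{eq:BE}, in which only $\widehat x$, $\widehat p$ and $b$ appear.

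\textbf{Recording the rounding errors.} For Steps 1--3 we reuse the relations from the SM proof:
\[
(A+\Delta_2A)\widehat z=u+\Delta_2u,\qquad
\widehat\theta=\frac{v^T\Theta_1\widehat y}{1+v^T\Theta_2\widehat z}\langle 2\rangle .
\]
The remaining steps give the following.
\begin{itemize}
\item Step 4: $\widehat d=\Theta_5(b-\Theta_6u\widehat\theta)$, with $|D_5|,|D_6|\le\eps_M I$.
\item Step 5: backward stability gives $(A+\Delta_3A)\widehat w=\widehat d+\Delta d$, with $\|\Delta d\|\le\CS\eps_M\|\widehat d\|$.
\item Step 6: $\widehat\theta_1\,\widehat\beta=(v^T\Theta_7\widehat w-\widehat\theta)\langle 2\rangle$, where $|D_7|\le\gamma_n I$.
\item Step 7: $\widehat p=\Theta_8\widehat z\,\widehat\theta_1$.
\item Step 8: $\widehat x=\Theta_9(\widehat w-\widehat p)$.
\end{itemize}
From Steps 7 and 8 we get the two key substitutions
\[
\widehat w=\Theta_9^{-1}\widehat x+\widehat p,\qquad \widehat z\,\widehat\theta_1=\Theta_8^{-1}\widehat p .
\]

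\textbf{Elimination.} Insert the first substitution into the Step~5 relation. This gives $(A+\Delta_3A)\Theta_9^{-1}\widehat x$, which we define to be $(A+\Delta A)\widehat x$; this yields the bound $(1+\CS)\eps_M\|A\|$, as for~\eqref{eq:DeltABnd}. The same insertion also produces the term $(A+\Delta_3A)\widehat p$. We rewrite this term using $\widehat p=\Theta_8\widehat z\widehat\theta_1$ and $(A+\Delta_2A)\widehat z=u+\Delta_2u$, namely
\[
(A+\Delta_3A)\widehat p=(u+\Delta_2u)\widehat\theta_1+\big[(A+\Delta_3A)-(A+\Delta_2A)\Theta_8^{-1}\big]\widehat p .
\]
The bracket goes into $\Delta G$; it contributes about $(2\CS+1)\eps_M\|A\|$. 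On the right-hand side, $\Theta_5b=b+\Delta b$ gives $\|\Delta b\|\le\eps_M\|b\|$. The factor $\Theta_5\Theta_6$ in front of $u\widehat\theta$ defines $u+\Delta u$, which gives the $2\eps_M\|u\|$ of~\eqref{eq:Deltau-bound}. The terms $\Delta d$ and $\Delta_2u\,\widehat\theta_1$ are collected into $\Delta g$. Since $\|\widehat d\|\le\|b\|+\|u\||\widehat\theta|+\mathcal O(\eps_M)$, this gives~\eqref{eq:Delta-g-bound}.

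After these steps the equation reads
\[
(A+\Delta A)\widehat x+(u+\Delta u)(\widehat\theta+\widehat\theta_1)=b+\Delta b+(\text{terms in }\widehat p)+\Delta g .
\]
It remains to express $\widehat\theta+\widehat\theta_1$ linearly in $\widehat x$ and $\widehat p$. We expand $\widehat\beta=(1+v^T\Theta_2\widehat z)\langle1\rangle$ in the Step~6 relation and use the two substitutions above. This gives
\[
\widehat\theta+\widehat\theta_1\,\langle 3\rangle=v^T\Theta_7\Theta_9^{-1}\widehat x+v^T\big(\Theta_7-\Theta_2\Theta_8^{-1}\langle 3\rangle\big)\widehat p .
\]
We then define $(v+\Delta v)^T:=v^T\Theta_7\Theta_9^{-1}$. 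This gives the $(n+1)\eps_M$ of~\eqref{eq:Deltav-bound}, exactly as in the derivation of~\eqref{eq:DeltvBnd}. The second term is absorbed into $\Delta G$ and contributes about $(2n+c)\eps_M\|u\|\|v\|$.

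\textbf{Main obstacle.} The hardest part is the scalar factor $\langle 3\rangle$, which multiplies $\widehat\theta_1$ but not $\widehat\theta$. The leftover $u\widehat\theta_1(\langle3\rangle-1)$ must not remain as a free term, since the stated $\Delta g$ only carries $\CS\eps_M$. My first attempt will be to rewrite $\widehat\theta_1$ via $\widehat z\widehat\theta_1=\Theta_8^{-1}\widehat p$. This turns the leftover into
\[
(\langle3\rangle-1)\,u\,e_i^T\Theta_8^{-1}\widehat p/\widehat z_i
\]
for some $i$, which is a rank-one multiple of $\widehat p$; but that multiple is not controlled by $\|u\|\|v\|$. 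I would therefore instead move the $\langle 3\rangle$ onto $\widehat\theta$: divide the Step 6 relation by $\langle3\rangle$, so that $\widehat\theta$ acquires a factor $\langle 3\rangle$. That factor can then be pushed through $\widehat\theta=v^T\Theta_1\widehat y\,\langle2\rangle/(\cdots)$, or alternatively absorbed into the counters attached to $u\widehat\theta$, so that it ends up in the $\widehat p$-coefficient, in $\Delta u$, or in $\Delta v$. If neither choice closes cleanly, I would accept an extra $\mathcal O(\eps_M)\|u\||\widehat\theta_1|$ contribution in $\Delta g$ and check whether it is second order. Finally, first-order bookkeeping of the $\Theta$'s, as in Theorem~\ref{thm:SM-back-bnd}, yields the constants in~\eqref{eq:DeltaG-bound2}.
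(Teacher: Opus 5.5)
Your scaffold matches the paper's: record each step with diagonal $\Theta$'s, substitute $\widehat w=\Theta_9^{-1}\widehat x+\widehat p$ and $\widehat z\,\widehat\theta_1=\Theta_8^{-1}\widehat p$, then eliminate $\widehat\theta+\langle 3\rangle\widehat\theta_1$ with the Step~6 identity. However, the step you flag as the main obstacle is left open, and none of your remedies closes it.

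The obstacle is also wider than the scalar $\langle 3\rangle$. After your elimination, $\widehat\theta$ is multiplied by $\Theta_5\Theta_6u$ while $\widehat\theta_1$ is multiplied by $u+\Delta_2u$. Your display $(u+\Delta u)(\widehat\theta+\widehat\theta_1)$ therefore silently drops $(\Theta_5\Theta_6-I)u\,\widehat\theta_1$. The true leftover is $(\langle 3\rangle\Theta_5\Theta_6-I)u\,\widehat\theta_1$, an $O(\eps_M)$ \emph{diagonal} matrix times $u\,\widehat\theta_1$. Your remedies fail as follows:
\begin{itemize}
\item A vector-wise factor like this cannot be folded into $\Delta u$, which must multiply $\widehat\theta$ and $\widehat\theta_1$ alike.
\item Dividing the Step~6 relation by $\langle3\rangle$ is only a relabelling. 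It either leaves the same leftover, or trades it for $(\Theta_5\Theta_6-\langle3\rangle^{-1}I)u\,\widehat\theta$ plus an extra $\langle3\rangle^{-1}$ on the rank-one term, which breaks the constant in \eqref{eq:Deltau-bound} or \eqref{eq:Deltav-bound}.
\item Pushing it through $\widehat\theta=v^T\Theta_1\widehat y\langle 2\rangle/(\cdots)$ reintroduces $\widehat y$, which has no place in \eqref{eq:BE}.
\item Dumping it into $\Delta g$ adds about $5\eps_M\|u\|\,|\widehat\theta_1|$, contradicting the constant $\CS$ in \eqref{eq:Delta-g-bound}.
\end{itemize}
Nor is this extra term second order. $\theta_1$ vanishes in exact arithmetic, but $\widehat\theta_1$ is not uniformly $O(\eps_M)$: Example~\ref{sing_capacitance:ex} has $\widehat\theta_1\approx 70$. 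This is why the theorem keeps $\CS\eps_M\|u\|\,|\widehat\theta_1|$ at first order.

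The missing idea is to charge the diagonal mismatch to $A$ rather than to $u$. Since $u\,\widehat\theta_1=(A+\Delta_2A)\Theta_8^{-1}\widehat p-\Delta_2u\,\widehat\theta_1$, any $O(\eps_M)$ diagonal multiple of $u\,\widehat\theta_1$ becomes an $O(\eps_M)\|A\|$ multiple of $\widehat p$ plus an $O(\eps_M^2)$ remainder. Your first attempt failed because it traded the scalar $\widehat\theta_1$ for $\widehat p_i/\widehat z_i$; one must instead trade the vector $u$ for $A\widehat z$. Concretely, rather than converting $(A+\Delta_3A)\widehat p$ entirely into $(u+\Delta_2u)\widehat\theta_1$, split
\[
(A+\Delta_3A)\widehat p=\langle3\rangle\Theta_5\Theta_6(u+\Delta_2u)\,\widehat\theta_1+\bigl[(A+\Delta_3A)-\langle3\rangle\Theta_5\Theta_6(A+\Delta_2A)\Theta_8^{-1}\bigr]\widehat p .
\]
This holds because $(A+\Delta_2A)\Theta_8^{-1}\widehat p=(u+\Delta_2u)\widehat\theta_1$. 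The split has three consequences:
\begin{itemize}
\item The same vector $\Theta_5\Theta_6u$ now multiplies $\widehat\theta+\langle3\rangle\widehat\theta_1$, so your Step~6 identity substitutes cleanly with $u+\Delta u=\Theta_5\Theta_6u$ and $(v+\Delta v)^T=v^T\Theta_7\Theta_9^{-1}$.
\item The term $\langle3\rangle\Theta_5\Theta_6\Delta_2u\,\widehat\theta_1$ goes into $\Delta g$ with constant $\CS$.
\item The bracket is a two-sided diagonal scaling of $A$, costing $(6+2\CS)\eps_M\|A\|+\mathcal O(\eps_M^2)\|A\|$.
\end{itemize}
Your own bracket gave only $(1+2\CS)\eps_M\|A\|$. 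The remaining $5\eps_M\|A\|$ in \eqref{eq:DeltaG-bound2} is exactly the price of this absorption, so the stated constants cannot come out of ``first-order bookkeeping'' without it. This is what the paper does when it adds and subtracts $\langle3\rangle\Theta_4\Theta_3u\,\widehat\theta_1$ and substitutes $u=(A+\Delta_2A)\widehat z-\Delta_1u$.
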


\begin{proof}
The proof is structured as follows. First, we derive a backward error representation for each of the eight steps of Alg.~\ref{MSM:alg}. We then combine these individual representations to obtain the overall representation~\eqref{eq:BE}, which constitutes the longest part of the proof. Finally, we bound each perturbation to the input data, thereby establishing~\eqref{eq:DeltaA-bound}--\eqref{eq:Delta-g-bound} in turn.\ The backward stability of the first two solves in Steps~1 and~2 gives
\begin{equation}
    (A+\Delta_1A)\widehat y=b+\Delta_1b,
\label{eq:y-solve}
\end{equation}
and
\begin{equation}
    (A+\Delta_2A)\widehat z=u+\Delta_1u,
\label{eq:z-solve}
\end{equation}
where
\[
    \|\Delta_1A\|\leq \CS\eps_M\|A\|,
    \qquad
    \|\Delta_1b\|\leq \CS\eps_M\|b\|,
\]
and
\[
    \|\Delta_2A\|\leq \CS\eps_M\|A\|,
    \qquad
    \|\Delta_1u\|\leq \CS\eps_M\|u\|.
\]

The two inner products in Step 3 give
\begin{equation}
    \widehat\alpha=v^T\Theta_1\widehat y,
    \qquad
    \Theta_1=I+D_y,
    \qquad
    |D_y|\leq \gamma_n I,
\label{eq:alpha}
\end{equation}
and
\begin{equation}
    \widetilde\beta=v^T\Theta_2\widehat z,
    \qquad
    \Theta_2=I+D_z,
    \qquad
    |D_z|\leq \gamma_n I.
\label{eq:beta}
\end{equation}
The scalar $\widehat\theta$ satisfies
\begin{equation}
    \widehat\theta
    = 
    fl(\frac{\widehat \alpha}{\widehat\beta}) = \frac{\widehat \alpha}{\widehat\beta} \langle 1\rangle 
    = \frac{\widehat\alpha}{1\oplus\widetilde\beta} \langle 1\rangle =
    \frac{\widehat\alpha}{1+\widetilde\beta} \langle 2\rangle.
\label{eq:theta}
\end{equation}

\noindent Step 4 gives
\begin{equation}
    \widehat u_2 := fl(u \widehat \theta) = \Theta_3u\,\widehat\theta,
    \qquad
    \Theta_3=I+D_3,
    \qquad
    |D_3|\leq \eps_M I
\label{eq:u2}
\end{equation}
and 
\begin{equation}
    \widehat d=\Theta_4(b-\widehat u_2),
    \qquad
    \Theta_4=I+D_4,
    \qquad
    |D_4|\leq \eps_M I.
\label{eq:b1-step}
\end{equation}
Substituting \eqref{eq:u2} into \eqref{eq:b1-step}, we obtain
\begin{equation}
    \widehat d
    =
    \Theta_4b-\Theta_4\Theta_3u\,\widehat\theta .
\label{eq:b1-expanded}
\end{equation}
Step 5 solves a third linear system whose backward stability gives
\begin{equation}
    (A+\Delta_3A)\widehat w
    =
    \widehat d+\Delta d,
\label{eq:w-solve}
\end{equation}
where
\begin{equation}
    \|\Delta_3A\|\leq \CS\eps_M\|A\|,
    \qquad
    \|\Delta d\|\leq \CS\eps_M\|\widehat d\|.
\label{eq:Deltab1-bound}
\end{equation}
Using \eqref{eq:b1-expanded}, equation \eqref{eq:w-solve} becomes
\begin{equation}
    (A+\Delta_3A)\widehat w
    =
    \Theta_4b-\Theta_4\Theta_3u\,\widehat\theta+\Delta d.
\label{eq:w-solve-expanded}
\end{equation}

\noindent Considering the operations in Step 6 one at a time gives
\begin{equation}
    \widehat\lambda := fl(v^T \widehat w) = v^T\Theta_5\widehat w,
    \qquad
    \Theta_5=I+D_w,
    \qquad
    |D_w|\leq \gamma_n I
\label{eq:lambda}
\end{equation}
and 
\begin{equation}
    \widehat\theta_1
    =
    \frac{\widehat\lambda-\widehat\theta}{1+ \widetilde\beta} \langle 3\rangle.
\label{eq:theta1}
\end{equation}
Therefore
\begin{equation}
    \widehat\lambda-\widehat\theta
    =
    \langle 3\rangle(1+\widetilde\beta)\widehat\theta_1
\label{eq:lambda-theta}
\end{equation}
which has used $\frac{1}{\langle 3\rangle} = \langle 3\rangle$; see Section 1. Using \eqref{eq:beta} and \eqref{eq:lambda}, this becomes
\[
    v^T\Theta_5\widehat w-\widehat\theta
    =
    \langle 3\rangle\left(1+v^T\Theta_2\widehat z\right)\widehat\theta_1.
\]
Equivalently,
\begin{equation}
    \widehat\theta+\langle 3\rangle\widehat\theta_1
    =
    v^T\Theta_5\widehat w
    -
    \langle 3\rangle v^T\Theta_2\widehat z\,\widehat\theta_1 .
\label{eq:theta-identity-pre}
\end{equation}

\noindent Step 7 gives
\begin{equation}
    \widehat p=\Theta_6\widehat z\,\widehat\theta_1,
    \qquad
    \Theta_6=I+D_6,
    \qquad
    |D_6|\leq \eps_M I
\label{eq:z3}
\end{equation}
and finally Step 8 gives
\begin{equation}
    \widehat x=\Theta_7(\widehat w-\widehat p),
    \qquad
    \Theta_7=I+D_7,
    \qquad
    |D_7|\leq \eps_M I.
\label{eq:x-step}
\end{equation}
Using \eqref{eq:z3}, we have
\[
    \widehat x
    =
    \Theta_7(\widehat w-\Theta_6\widehat z\,\widehat\theta_1).
\]
Since $\Theta_7$ is nonsingular,
\begin{equation}
    \widehat w
    =
    \Theta_7^{-1}\widehat x
    +
    \Theta_6\widehat z\,\widehat\theta_1 .
\label{eq:w-in-terms-of-x}
\end{equation}
Substituting \eqref{eq:w-in-terms-of-x} into
\eqref{eq:theta-identity-pre} gives
\begin{align}
    \widehat\theta+\langle 3\rangle\widehat\theta_1
    &=
    v^T\Theta_5
    \left(
        \Theta_7^{-1}\widehat x
        +
        \Theta_6\widehat z\,\widehat\theta_1
    \right)
    -
    \langle 3\rangle v^T\Theta_2\widehat z\,\widehat\theta_1      \nonumber \\
    &=
    v^T\Theta_5\Theta_7^{-1}\widehat x
    +
    v^T(\Theta_5\Theta_6-\langle 3\rangle\Theta_2)\widehat z\,
    \widehat\theta_1 .\label{eq:theta-identity}
\end{align}
Defining $s:=    v^T(\Theta_5\Theta_6-\langle 3\rangle\Theta_2)\widehat z$ gives
\begin{equation}
    \widehat\theta+\langle 3\rangle\widehat\theta_1
    =
    v^T\Theta_5\Theta_7^{-1}\widehat x+s\widehat\theta_1 .
\label{eq:theta-identity-s}
\end{equation}

We now substitute \eqref{eq:w-in-terms-of-x} into
\eqref{eq:w-solve-expanded}. This gives
\begin{equation}
    (A+\Delta_3A)\Theta_7^{-1}\widehat x
    +
    (A+\Delta_3A)\Theta_6\widehat z\,\widehat\theta_1
    +
    \Theta_4\Theta_3u\,\widehat\theta                  
    =
    \Theta_4b+\Delta d .
\label{eq:x-key}
\end{equation}
We rewrite the two terms involving $\widehat \theta$ and $\widehat \theta_1$ by adding and subtracting
$\langle 3\rangle\Theta_4\Theta_3u\,\widehat\theta_1$:
\[
\begin{aligned}
    \Theta_4\Theta_3u\,\widehat\theta
    +
    (A+\Delta_3A)\Theta_6\widehat z \widehat\theta_1
    &=
    \Theta_4\Theta_3u(\widehat\theta+\langle 3\rangle\widehat\theta_1)
    +
    \big((A+\Delta_3A)\Theta_6\widehat z - \langle 3\rangle\Theta_4\Theta_3u \big)\widehat\theta_1 .
\end{aligned}
\]
Using \eqref{eq:theta-identity-s},
\begin{align}
    \Theta_4\Theta_3u\,\widehat\theta
    +
    (A+\Delta_3A)\Theta_6\widehat z \widehat\theta_1
    &=
    \Theta_4\Theta_3u\,v^T\Theta_5\Theta_7^{-1}\widehat x
    +
    \Theta_4\Theta_3u\,s\widehat\theta_1      \nonumber           \\
    &\quad
    +
    \Big( (A+\Delta_3A)\Theta_6\widehat z -\langle 3\rangle\Theta_4\Theta_3u \Big)\widehat\theta_1       \nonumber      \\
    &=
    \Theta_4\Theta_3u\,v^T\Theta_5\Theta_7^{-1}\widehat x
    +
    h\widehat\theta_1, \label{eq:h-short-derivation}
\end{align}
where
\begin{equation}
    h:=(A+\Delta_3A)\Theta_6\widehat z -\langle 3\rangle\Theta_4\Theta_3u+\Theta_4\Theta_3u\,s .
\label{eq:h-short}
\end{equation}
Substituting the definition of $s$ gives
\begin{equation}
    h
    =
    (A+\Delta_3A)\Theta_6\widehat z
    -
    \langle 3\rangle\Theta_4\Theta_3u
    +
    \Theta_4\Theta_3u\,
    v^T(\Theta_5\Theta_6-\langle 3\rangle\Theta_2)\widehat z .
\label{eq:h-before-zsolve}
\end{equation}

From the solve for $\widehat z$, we know that $(A+\Delta_2A)\widehat z=u+\Delta_1u$. Hence
\[
    u=(A+\Delta_2A)\widehat z-\Delta_1u.
\]
Substituting this into \eqref{eq:h-before-zsolve}
yields
\[
\begin{aligned}
h
&=
(A+\Delta_3A)\Theta_6\widehat z
-
\langle 3\rangle\Theta_4\Theta_3
\left((A+\Delta_2A)\widehat z-\Delta_1u\right)       +
\Theta_4\Theta_3u\,
v^T(\Theta_5\Theta_6-\langle 3\rangle\Theta_2)\widehat z          \\
&=
\left[
(A+\Delta_3A)\Theta_6
-
\langle 3\rangle\Theta_4\Theta_3(A+\Delta_2A)
\right]\widehat z      +
\langle 3\rangle\Theta_4\Theta_3\Delta_1u                                 \\
&\quad
+
\Theta_4\Theta_3u\,
v^T(\Theta_5\Theta_6-\langle 3\rangle\Theta_2)\widehat z .
\end{aligned}
\]
With the definitions
\[
\Delta G_1 :=
- (A+\Delta_3A)\Theta_6 + \langle 3\rangle\Theta_4\Theta_3(A+\Delta_2A) - \Theta_4\Theta_3u\, v^T(\Theta_5\Theta_6-\langle 3\rangle\Theta_2),
\]
and $\widehat p = \Theta_6 \widehat z \widehat \theta_1$, which is the quantity computed in Step 7 of the algorithm, we see that 
\[
h \widehat \theta_1 = 
- \Delta G_1 \widehat z \widehat\theta_1 + \langle 3\rangle\Theta_4\Theta_3\Delta_1u \widehat \theta_1 
= - \Delta G_1 \Theta_6^{-1} \Theta_6 \widehat z \widehat\theta_1 + \langle 3\rangle\Theta_4\Theta_3\Delta_1u \widehat \theta_1 
=: 
- \Delta G \widehat p + \langle 3\rangle\Theta_4\Theta_3\Delta_1u \widehat \theta_1
\]
where
\begin{equation}
\Delta G := \Delta G_1 \Theta_6^{-1}.
\label{eq:Hmat-def}
\end{equation}

Substituting \eqref{eq:h-short-derivation} into \eqref{eq:x-key} gives
\[
    (A+\Delta_3A)\Theta_7^{-1}\widehat x
    +
    \Theta_4\Theta_3u\,v^T\Theta_5\Theta_7^{-1}\widehat x
    - \Delta G \widehat p + \langle 3\rangle\Theta_4\Theta_3\Delta_1u \widehat \theta_1
    =
    \Theta_4b+\Delta d.
\]
Therefore
\begin{equation}
    \left[
        (A+\Delta_3A)\Theta_7^{-1}
        +
        \Theta_4\Theta_3u\,v^T\Theta_5\Theta_7^{-1}
    \right]\widehat x
    =
    \Theta_4b+\Delta d + \Delta G \widehat p  - \langle 3\rangle\Theta_4\Theta_3\Delta_1u \widehat \theta_1.
\label{eq:almost-final}
\end{equation}

We now identify the perturbations $\Delta A, \Delta u, \Delta v, \Delta b$ and $\Delta g$. For the first term in the bracket above, we add and subtract $A + \Delta_3A$ to get
\[
    (A+\Delta_3A)\Theta_7^{-1}
    =
    A+\Delta_3A+(A+\Delta_3A)(\Theta_7^{-1}-I) 
    =:    A+\Delta A,
\]
which gives
\begin{equation}
\label{eq:DeltaA-def}
    \Delta A =
    \Delta_3 A+(A+\Delta_3A)(\Theta_7^{-1}-I).
\end{equation}
For the second term in~\eqref{eq:almost-final}, let
\[
    \Theta_4\Theta_3u\,v^T\Theta_5\Theta_7^{-1}
    =:    (u+\Delta u)(v+\Delta v)^T,
\]
where
\begin{equation}
    \Delta u
    :=
    (\Theta_4\Theta_3-I)u,
\label{eq:Deltau-def}
\end{equation}
and $(v+\Delta v)^T :=v^T\Theta_5\Theta_7^{-1}$ which gives
\begin{equation}
    \Delta v
    =
    \left(\Theta_7^{-T}\Theta_5^T-I\right)v,
\label{eq:Deltav-def}
\end{equation}

Defining
\begin{equation}    
\label{eq:Deltab-new-def}
\Delta b :=(\Theta_4-I)b
\end{equation}
and
\[
  \Delta g :=   \Delta d  - \langle 3\rangle\Theta_4\Theta_3\Delta_1u \widehat \theta_1
  \]
we see that~\eqref{eq:almost-final} finally gives the desired error representation~\eqref{eq:BE}.

It remains to bound each of the perturbations $\Delta A, \Delta u, \Delta v, \Delta b, \Delta G$ and $\Delta g$. 

\paragraph{Bounding \(\|\Delta A\|\)}
Since
$  \Theta_7=I+D_7$ with $ |D_7|\leq \eps_M I$, in view of~\eqref{eq:Theta-I-bnd} we have $\|\Theta_7^{-1}-I\|\leq \gamma_1$. Therefore~\eqref{eq:DeltaA-def} yields
\[
\begin{aligned}
    \|\Delta A\|
    &\leq
    \|\Delta_3A\|
    +
    \|A+\Delta_3A\|\,\|\Theta_7^{-1}-I\|      
    \leq
    \CS\eps_M\|A\|
    +
    (1+\CS\eps_M)\|A\|\gamma_1                  \\
    &=
    \left[
        \CS\eps_M+(1+\CS\eps_M)\gamma_1
    \right]\|A\|
\end{aligned}
\]
which proves \eqref{eq:DeltaA-bound}.

\paragraph{Bounding \(\|\Delta u\|\)}
Next, $    \Theta_4\Theta_3$ has diagonal entries of the form $\langle 2\rangle$. Hence
$\|\Theta_4\Theta_3-I\|\leq \gamma_2$. Therefore, from~\eqref{eq:Deltau-def} we see that
\[
    \|\Delta u\|
    =
    \|(\Theta_4\Theta_3-I)u\|
    \leq
    \gamma_2\|u\|
\]
which yields~\eqref{eq:Deltau-bound}.

\paragraph{Bounding \(\|\Delta v\|\)}
Also, $  \Theta_5\Theta_7^{-1}$ has diagonal entries of the form $\langle n+1\rangle$. Hence
\[
    \|\Theta_5\Theta_7^{-1}-I\|
    \leq
    \gamma_{n+1}.
\]
Taking norms from both sides of~\eqref{eq:Deltav-def} yields
$
    \|\Delta v\|
    \leq
    \gamma_{n+1}\|v\|
$
which proves \eqref{eq:Deltav-bound}.

\paragraph{Bounding \(\|\Delta b\|\)}
For $\Delta b$, taking norms from both sides of~\eqref{eq:Deltab-new-def} readily gives
$\|\Delta b\| \leq \|\Theta_4-I\|\|b\| \leq    \eps_M\|b\|$, which is~\eqref{eq:Deltab-bound-raw}.

\paragraph{Bounding \(\|\Delta G\|\)}
Recall from~\eqref{eq:Hmat-def} that $\Delta G = \Delta G_1 \Theta_6^{-1}$ where
\[
\Delta G_1 := - (A+\Delta_3A)\Theta_6 + \langle 3\rangle\Theta_4\Theta_3(A+\Delta_2A) - \Theta_4\Theta_3u\, v^T(\Theta_5\Theta_6-\langle 3\rangle\Theta_2).
\]
For the first two terms in $\Delta G_1$, we have
\begin{align}
&- (A+\Delta_3A)\Theta_6
+
\langle 3\rangle\Theta_4\Theta_3(A+\Delta_2A)                       \nonumber  \\
&\quad
=
-A\Theta_6 + \langle 3\rangle\Theta_4\Theta_3 A
-
\Delta_3A\,\Theta_6
+
\langle 3\rangle\Theta_4\Theta_3\Delta_2A.
\label{new-deltaG-1:eq}
\end{align}
Since $\|I - \Theta_6\| = \| D_6\| \leq \eps_M$ and since $\langle 3\rangle\Theta_4\Theta_3$ has diagonal entries of the form \(\langle 5\rangle\), we have
\[
    \|\langle 3\rangle\Theta_4\Theta_3-I\|\leq \gamma_5.
\]
Therefore
\[
    \|-\Theta_6 + \langle 3\rangle\Theta_4\Theta_3\|
    \leq
    \|I - \Theta_6\|+\|\langle 3\rangle\Theta_4\Theta_3-I\| 
    \leq
    \eps_M+\gamma_5 .
\]
Moreover,
\[
    \|\Theta_6\|\leq 1+\eps_M,
    \qquad
    \|\langle 3\rangle\Theta_4\Theta_3\|\leq 1+\gamma_5.
\]
Adding and subtracting $A$ in the first two terms of~\eqref{new-deltaG-1:eq}, taking norms and using
\[
    \|\Delta_3A\|\leq \CS\eps_M\|A\|,
    \qquad
    \|\Delta_2A\|\leq \CS\eps_M\|A\|,
\]
we obtain
\begin{align}
&\left\|
- (A+\Delta_3A)\Theta_6
+
\langle 3\rangle\Theta_4\Theta_3(A+\Delta_2A)
\right\|                                                    \nonumber    \\
&\quad
\leq
(\eps_M+\gamma_5)\|A\|
+
\CS\eps_M(1+\eps_M)\|A\|
+
\CS\eps_M(1+\gamma_5)\|A\|. \label{eq:first-h-bound}
\end{align}

For the last term in $\Delta G_1$, observe that $\Theta_5\Theta_6$ has diagonal entries of the form \(\langle n+1\rangle\), whereas $ \langle 3\rangle\Theta_2$ has diagonal entries of the form \(\langle n+3\rangle\). Hence
\[
    \|\Theta_5\Theta_6-I\|\leq \gamma_{n+1},
    \qquad
    \|\langle 3\rangle\Theta_2-I\|\leq \gamma_{n+3}.
\]
Therefore
\[
    \|\Theta_5\Theta_6-\langle 3\rangle\Theta_2\|
    \leq
    \|\Theta_5\Theta_6-I\|
    +
    \|\langle 3\rangle\Theta_2-I\|                        
    \leq
    \gamma_{n+1}+\gamma_{n+3}.
\]
Since
\[
    \|\Theta_4\Theta_3u\|\leq (1+\gamma_2)\|u\|,
\]
we get
\begin{align}
\left\|
\Theta_4\Theta_3u\,
v^T(\Theta_5\Theta_6-\langle 3\rangle\Theta_2)\right\|  \leq  
 (1+\gamma_2)\|u\|\, \|v\|\,
\|\Theta_5\Theta_6-\langle 3\rangle\Theta_2\|\,                                            \nonumber \\
\leq
(1+\gamma_2)(\gamma_{n+1}+\gamma_{n+3})
\|u\|\,\|v\|\,. 
\label{eq:third-h-bound}
\end{align}
Combining~\eqref{eq:first-h-bound} and~\eqref{eq:third-h-bound}, we obtain
\[
\| \Delta G_1\| \leq 
\Big(
    (\eps_M+\gamma_5)
    +
    \CS\eps_M(1+\eps_M)
    +
    \CS\eps_M(1+\gamma_5)
\Big)     \|A\|                                           
+
(1+\gamma_2)(\gamma_{n+1}+\gamma_{n+3})
\|u\|\,\|v\|.
\label{eq:DeltaG-bound}
\]
which gives~\eqref{eq:DeltaG-bound2}. Note that by~\eqref{eq:Theta-inv-bnd}, we have $\|\Theta_6^{-1} \| \le 1 + \eps_M + \mathcal{O}(\eps_M^2)$ implying that the above bound for $\| \Delta G_1\|$ carries over to $\| \Delta G\|$ up to $\eps_M^2$.

\paragraph{Bounding \(\|\Delta g\|\)}
To bound $\|\Delta g\| = \|\Delta d - \langle 3\rangle\Theta_4\Theta_3\Delta_1u \widehat \theta_1 \|$, first recall from~\eqref{eq:Deltab1-bound} that 
\[
    \|\Delta d\|\leq \CS\eps_M\|\widehat d\|
\]
where, by~\eqref{eq:b1-expanded}, we have $\widehat d = \Theta_4(b-\Theta_3u\,\widehat\theta)$. Therefore
\[
\begin{aligned}
    \|\widehat d\|
    &\leq
    \|\Theta_4\|
    \left(
        \|b\|+\|\Theta_3\|\|u\|\,|\widehat\theta|
    \right)
    \leq
    (1+\eps_M)
    \left(
        \|b\|+(1+\eps_M)\|u\|\,|\widehat\theta|
    \right)
    \end{aligned}
\]
resulting in
\begin{equation}
    \|\Delta d\|\leq \CS \eps_M \    (1+\eps_M)
    \left(
        \|b\|+(1+\eps_M)\|u\|\,|\widehat\theta|
    \right).
\label{eq:Delta-b1-bound-new}
\end{equation}

To bound the second term of $\Delta g$, recall that $\langle 3\rangle\Theta_4\Theta_3$ has diagonal entries of the form \(\langle 5\rangle\) implying that $\|\langle 3\rangle\Theta_4\Theta_3\|\leq 1+\gamma_5$. 
Therefore
\begin{equation}
 \| - \langle 3\rangle\Theta_4\Theta_3\Delta_1u \ \widehat \theta_1\|
    \leq
    (1+\gamma_5)\|\Delta_1u\|  \     |\widehat \theta_1|
    \leq
    \CS\eps_M(1+\gamma_5)\|u\|\ |\widehat \theta_1|.
\label{eq:second-h-bound}
\end{equation}
Combining~\eqref{eq:Delta-b1-bound-new} and~\eqref{eq:second-h-bound} gives
\[
    \|\Delta g\|
    \leq
\CS \eps_M \    (1+\eps_M)
    \left(
        \|b\|+(1+\eps_M)\|u\|\,|\widehat\theta|
    \right)
    +
    	\CS\eps_M (1+\gamma_5)\|u\|\ |\widehat \theta_1|
\]
which gives~\eqref{eq:Delta-g-bound}. 
\end{proof}

We are ready to use the previous theorem to find a bound for the MSM backward error.
\begin{theorem} Assume $\widehat x$ denotes the solution of MSM in floating point arithmetic. Then its backward error satisfies
\begin{align}
\omega(\hat{x}) & \leq  \ 
 \eps_M (2n+4 + 2\CS) \frac{\big( \| A\| + \|u\| \|v\| \big)\,(\| \widehat x\| + \| \widehat p\|)}{\|A+u v^{T}\|\, \|\widehat x\| + \|b\|} \nonumber \\ 
 & \ + \eps_M\, (1+\CS)  \frac{\|u\| \big( |\widehat \theta| + |\widehat \theta_1| \big) + \|b\|}{\|A+u v^{T}\|\, \|\widehat x\| + \|b\|} + \mathcal{O}(\eps_M^2).
  \label{MSM_BackErrBnd:eq}
\end{align}
\end{theorem}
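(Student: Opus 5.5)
The plan mirrors the proof of Theorem~\ref{thm:new-SM-BackErrBnd}, now starting from the backward error representation~\eqref{eq:BE} of Theorem~\ref{MSM-back-err:thm}. Rearranging~\eqref{eq:BE} gives the MSM residual
\[
r := b-(A+uv^T)\widehat x = \big(\Delta A + u\,\Delta v^T + \Delta u\, v^T + \Delta u\,\Delta v^T\big)\widehat x - \Delta b - \Delta G\,\widehat p - \Delta g .
\]
We take norms term by term and insert the bounds~\eqref{eq:DeltaA-bound}--\eqref{eq:Delta-g-bound}. The product $\|\Delta u\|\,\|\Delta v\|$ is $\mathcal O(\eps_M^2)$ and is absorbed into the remainder.

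First we handle the coefficient of $\|\widehat x\|$. From~\eqref{eq:DeltaA-bound}, \eqref{eq:Deltau-bound} and~\eqref{eq:Deltav-bound} it is
\[
\eps_M\big((1+\CS)\|A\| + (n+3)\|u\|\,\|v\|\big) + \mathcal O(\eps_M^2),
\]
which is at most $\eps_M(2n+4+2\CS)(\|A\|+\|u\|\,\|v\|)$ since $1+\CS\le 2n+4+2\CS$ and $n+3\le 2n+4+2\CS$. The coefficient of $\|\widehat p\|$ comes from~\eqref{eq:DeltaG-bound2}:
\[
\eps_M\big((6+2\CS)\|A\| + (2n+4)\|u\|\,\|v\|\big).
\]
Its $\|u\|\,\|v\|$ part is already at most $(2n+4+2\CS)$. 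For the $\|A\|$ part, $6+2\CS\le 2n+4+2\CS$ holds whenever $n\ge 1$. Both coefficients are therefore dominated by the common factor $\eps_M(2n+4+2\CS)(\|A\|+\|u\|\,\|v\|)$, which multiplies $\|\widehat x\|+\|\widehat p\|$. This gives the first term of~\eqref{MSM_BackErrBnd:eq}.

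The remaining terms are $\|\Delta b\|\le \eps_M\|b\|$ from~\eqref{eq:Deltab-bound-raw} and, from~\eqref{eq:Delta-g-bound},
\[
\|\Delta g\|\le \CS\eps_M\big(\|b\|+\|u\|(|\widehat\theta|+|\widehat\theta_1|)\big).
\]
Their sum is at most $\eps_M(1+\CS)\big(\|b\|+\|u\|(|\widehat\theta|+|\widehat\theta_1|)\big)$, which is the numerator of the second term. Dividing the whole residual bound by $\|A+uv^T\|\,\|\widehat x\|+\|b\|$, as in the Rigal--Gaches formula~\eqref{eq:backErrB_def}, yields~\eqref{MSM_BackErrBnd:eq}.

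The only delicate point is the bookkeeping of constants: we must check that every first-order coefficient is dominated by the stated $2n+4+2\CS$ or $1+\CS$. The one non-obvious comparison is $6+2\CS\le 2n+4+2\CS$ for the $\|A\|\,\|\widehat p\|$ term, which requires $n\ge 1$. All second-order pieces are collected into $\mathcal O(\eps_M^2)$ exactly as in Theorem~\ref{thm:new-SM-BackErrBnd}.
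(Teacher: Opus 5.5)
Your proof is correct and follows the paper's argument essentially line for line. You form the residual from \eqref{eq:BE}, bound each perturbation using Theorem~\ref{MSM-back-err:thm}, absorb the first-order coefficients into the common factors $2n+4+2\CS$ and $1+\CS$, and divide by the Rigal--Gaches denominator; your explicit remark that $6+2\CS\le 2n+4+2\CS$ requires $n\ge 1$ is a check the paper makes only implicitly.
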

\begin{proof}
The representation~\eqref{eq:BE} yields
\begin{equation}
    r
    =
    \Big(\Delta A +  u \Delta v^T +  \Delta u\ v^T + \Delta u\ \Delta v^T\Big)\widehat x -\Delta b - \Delta G \ \widehat p - \Delta g
\end{equation}
Now proceeding similar to Theorem~\ref{thm:new-SM-BackErrBnd}, we take the norm to obtain
\begin{align*}
\| r \| & \leq \Big( \| \Delta A \| + \|u\|\ \|\Delta v\|+ \|\Delta u\| \|v\| + \|\Delta u\| \ \|\Delta v\| \Big) \|\widehat x\| + \|\Delta b\| 
+ \| \Delta G\|\ \|\widehat p\| 
+ \|\Delta g\| \nonumber \\
& \leq 
\eps_M \Big( (1+\CS) \|A\| +  (n+3) \|u\|\ \|v\| \Big) \|\widehat x\| + \eps_M \|b\| + \mathcal O(\eps_M^2) \nonumber \\
& + 
\eps_M \Big(
    (6+2\CS)\|A\|
    +
    (2n+4)\|u\|\,\|v\|  \Big) \ \|\widehat p\|  + \CS \eps_M \Big( \|b\| + \|u\| (|\widehat \theta| + |\widehat \theta_1|) \Big) \nonumber \\
&\leq \eps_M (n+3+\CS) \Big(  \| A\| + \|u\| \|v\| \Big)\,\| \widehat x\| \, +\, \eps_M\, (1+\CS) \|b\| \, \nonumber \\[4pt]
& \ \ \ +\, \eps_M  (2n+4+2\CS) \Big( \|A\| + \|u\| \|v\| \Big)\,\|\widehat p\| + 
\eps_M \CS \|u\| \big( |\widehat \theta| + |\widehat \theta_1| \big) +
\mathcal{O}(\eps_M^2) 
\end{align*}
Hence,
\[
\| r \| \lesssim
\eps_M (2n+4+2\CS) \big(  \| A\| + \|u\| \|v\| \big)\ \big( \| \widehat x\| + \|\widehat p\| \big) 
 + \eps_M\, (1+\CS) \Big( \|b\| + \|u\| \big( |\widehat \theta| + |\widehat \theta_1| \big) \Big)
 \]
where $\lesssim$ means second-order terms in $\eps_M$ are dropped. In view of~\eqref{eq:backErrB_def} we obtain~\eqref{MSM_BackErrBnd:eq}.
\end{proof}

\subsection{Forward error of MSM}
We can follow the same idea as in Theorem~\eqref{thm:SM_FerrBnd} to establish a forward error bound using Theorem~\ref{MSM-back-err:thm}. 
\begin{theorem}
\label{thm:MSM_FerrBnd}
Assume linear systems $Ay=b$, $Az=u$ and $Aw=\widehat d$ are solved normwise backward stably. Define
\begin{align*}
& \widetilde c_3 := (n+\CS+3) \frac{\|A\| + \|u\| \|v\|}{\|B\|}, \ \widetilde c_4 := (2n+2\CS+4) \frac{\|A\| + \|u\| \|v\|}{\|B\|},\\ & \widetilde c_5 := \frac{\CS \|u\|}{\|B\|}, \ c_3 := \frac{1+\CS + \widetilde c_3 }{1-\eps_M \widetilde c_3 \kappa(B)}, \ c_4 := \frac{\widetilde c_4 }{1-\eps_M \widetilde c_3 \kappa(B)}, \ c_5 := \frac{\widetilde c_5 }{1-\eps_M \widetilde c_3 \kappa(B)}
\end{align*}
and assume that $\eps_M \widetilde c_3 \kappa(B)<1$. If $\widehat x$ is computed with MSM in floating point arithmetic, then
\begin{equation}\label{eq:MSM_FerrBnd}
\| \widehat x-x \|
\le \frac{\eps_M \kappa(B) }{1-c_3 \eps_M \kappa(B)} \Big( c_3 \|\widehat x\| + c_4  \|\widehat p\| + c_5 (|\widehat \theta| + |\widehat \theta_1|)  \Big).
\end{equation}
\end{theorem}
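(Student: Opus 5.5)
We mimic the proof of Theorem~\ref{thm:SM_FerrBnd}, now starting from the backward error representation~\eqref{eq:BE} of Theorem~\ref{MSM-back-err:thm}. First we substitute $b=Bx$ with $B=A+uv^T$ into~\eqref{eq:BE}. We then set $e:=\widehat x-x$ and
\[
\Delta B:=\Delta A+u\,\Delta v^T+\Delta u\,v^T+\Delta u\,\Delta v^T .
\]
Subtracting $(B+\Delta B)x$ from both sides gives
\[
(B+\Delta B)e=\Delta b+\Delta G\,\widehat p+\Delta g-\Delta B\,x .
\]
Multiplying by $B^{-1}$ and writing $E:=B^{-1}\Delta B$ yields $(I+E)e=B^{-1}(\Delta b+\Delta G\,\widehat p+\Delta g-\Delta B\,x)$.

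Next we bound $\|\Delta B\|$. Using~\eqref{eq:DeltaA-bound}--\eqref{eq:Deltav-bound}, we get
\[
\|\Delta B\|\le \eps_M\big((1+\CS)\|A\|+(n+3)\|u\|\|v\|\big)+\mathcal O(\eps_M^2)\le \eps_M\widetilde c_3\|B\|
\]
to first order. This is the analogue of Lemma~\ref{lem:C_B_prime}, with $(n+1)+2$ replacing $(n+2)+\CS$ in the $\|u\|\|v\|$ coefficient. Hence $\|E\|\le\eps_M\widetilde c_3\kappa(B)<1$ by hypothesis, and the Neumann series gives $\|(I+E)^{-1}\|\le 1/(1-\eps_M\widetilde c_3\kappa(B))$.

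We then take norms and insert the remaining bounds:
\begin{itemize}
\item from~\eqref{eq:Deltab-bound-raw}, $\|\Delta b\|\le\eps_M\|b\|$;
\item from~\eqref{eq:DeltaG-bound2}, $\|\Delta G\|\le\eps_M\widetilde c_4\|B\|$ to first order, since $6+2\CS$ and $2n+4$ are both at most $2n+2\CS+4$;
\item from~\eqref{eq:Delta-g-bound}, $\|\Delta g\|\le \CS\eps_M\|b\|+\eps_M\widetilde c_5\|B\|(|\widehat\theta|+|\widehat\theta_1|)$.
\end{itemize}
Next we bound the $b$-terms by $\|b\|\le\|B\|\|x\|$. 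Collecting terms and factoring out $\|B^{-1}\|\|B\|=\kappa(B)$ gives
\[
\|e\|\le\eps_M\kappa(B)\Big(c_3\|x\|+c_4\|\widehat p\|+c_5(|\widehat\theta|+|\widehat\theta_1|)\Big).
\]
Here the coefficient of $\|x\|$ is $(1+\CS+\widetilde c_3)/(1-\eps_M\widetilde c_3\kappa(B))=c_3$. The $1$ comes from $\Delta b$, the $\CS$ from the $\|b\|$ part of $\Delta g$, and $\widetilde c_3$ from $\Delta B\,x$. Second-order terms are dropped, exactly as in the SM proof.

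Finally, to make the bound computable, we replace $\|x\|\le\|\widehat x\|+\|e\|$. Moving $\eps_M\kappa(B)c_3\|e\|$ to the left-hand side and dividing by $1-c_3\eps_M\kappa(B)$ gives~\eqref{eq:MSM_FerrBnd}, provided that this quantity is positive. That positivity is implicit in the statement, and we would note it as an assumption.

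The main obstacle is bookkeeping rather than any new idea. We must check carefully that the first-order constants in~\eqref{eq:DeltaA-bound}--\eqref{eq:Delta-g-bound} are indeed dominated by $\widetilde c_3$, $\widetilde c_4$ and $\widetilde c_5$. We must also place the two $\|b\|$ contributions (from $\Delta b$ and from $\Delta g$) consistently into $c_3$ rather than into $c_5$. Throughout, $\mathcal O(\eps_M^2)$ terms are absorbed as in Theorem~\ref{thm:SM_FerrBnd}.
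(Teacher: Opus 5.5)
Your proposal is correct and follows the paper's proof essentially step for step: substitute $b=Bx$ into the MSM backward-error representation, bound $\|\Delta B\|\le\eps_M\widetilde c_3\|B\|$ to first order, apply the Neumann series, collect terms into $c_3,c_4,c_5$, and finish with $\|x\|\le\|\widehat x\|+\|e\|$. You are also right that the final division needs $c_3\eps_M\kappa(B)<1$, equivalently $\eps_M\kappa(B)(1+\CS+2\widetilde c_3)<1$. This is stronger than the stated hypothesis $\eps_M\widetilde c_3\kappa(B)<1$, and the paper's proof leaves it implicit.
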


\begin{proof}
As the assumptions of Theorem~\ref{MSM-back-err:thm} are satisfied, we substitute $b = (A+uv^T) x$ into~\eqref{eq:BE} to get
\[
    \left(A+\Delta A+(u+\Delta u)(v+\Delta v)^T\right)\widehat x 
    =
    ( A + uv^T) x +\Delta b + \Delta G \ \widehat p + \Delta g
\]
which then by defining $e:= \widehat x - x$, $B:= A+uv^T$ and $\Delta B:= \Delta A + u\ \Delta v^T+\Delta u\ v^T +\Delta u\ \Delta v^T$ and subtracting $\Delta B\ x$ from both sides yield
\[
(B + \Delta B) e =  \Delta b + \Delta G \ \widehat p + \Delta g - \Delta B\ x 
\]
Multiplying by $B^{-1}$ from the left and defining $E:=B^{-1}\Delta B$ gives
\begin{equation}
\label{eq:IplusEe-MSM}
(I+E)e = B^{-1}\bigl(\Delta b +  \Delta G \ \widehat p + \Delta g - \Delta B\ x \bigr).
\end{equation}
The definition of $\widetilde c_3$, together with the bounds~\eqref{eq:DeltaA-bound},~\eqref{eq:Deltau-bound} and~\eqref{eq:Deltav-bound} ---which already hold--- guarantee that $\|\Delta B\| \le \eps_M \widetilde c_3 \|B\|$. Hence, $\|E\|< 1$ and $\|(I+E)^{-1}\| \le \big(1-\|B^{-1}\Delta B\|\big)^{-1}$. 
Hence, multiplying both sides by $(I+E)^{-1}$, taking norms, using the bounds on $\|\Delta b\|$ and $\|\Delta F\|$ from Theorem~\ref{MSM-back-err:thm}, and $b = Bx$ give
\begin{align}
\|e\| 
&\leq \|(I+E)^{-1}\| \|B^{-1}\|
 \Big(
 \|\Delta b\| + \|\Delta G\|\,\|\widehat p\|
 + \|\Delta g\| + \|\Delta B\|\,\|x\|
 \Big) \nonumber \\
&\leq
 \frac{\|B^{-1}\|}{1-\|B^{-1}\Delta B\|}
 \Big(
 \|\Delta b\| + \|\Delta G\|\,\|\widehat p\|
 + \|\Delta g\| + \|\Delta B\|\,\|x\|
 \Big) \nonumber \\
&\leq
 \frac{\eps_M\|B^{-1}\|}
      {1-\eps_M \widetilde c_3 \kappa(B)}
 \Bigg[
 \|b\|
 + \Big( (2\CS+6)\|A\| + (2n+4)\|u\|\,\|v\| \Big)
   \|\widehat p\| \nonumber \\
&\qquad\qquad\qquad
 + \CS\Big( \|b\| + \|u\|(|\widehat\theta|+|\widehat\theta_1|) \Big)
 + \widetilde c_3 \|B\|\,\|x\|
 \Bigg]
 \label{abs-for-err-msm-temp:eq} \\
&\leq
 \frac{\eps_M\|B^{-1}\|}
      {1-\eps_M \widetilde c_3 \kappa(B)}
 \Bigg[
 \|Bx\|
 + (2\CS+2n+4)(\|A\|+\|u\|\,\|v\|)
   \|\widehat p\| \nonumber \\
&\qquad\qquad\qquad
 + \CS\Big( \|Bx\| + \|u\|(|\widehat\theta|+|\widehat\theta_1|) \Big)
 + \widetilde c_3 \|B\|\,\|x\|
 \Bigg]. \nonumber \\
  & =  \frac{\eps_M \|B^{-1}\|}{1-\eps_M \widetilde c_3 \kappa(B)}  \Bigg(  
  (1+\CS) \|B\| \|x\|  + \widetilde c_4  \|B\| \| \widehat p\|  
  + \CS  \|u\| (|\widehat \theta| + |\widehat \theta_1|)
  + \widetilde c_3 \|B\| \|x \| 
\Bigg)\nonumber \\
  & =  \frac{\eps_M \|B^{-1}\|}{1-\eps_M \widetilde c_3 \kappa(B)}  \Bigg(  
  (1+\CS +\widetilde c_3) \|B\| \|x\|  + \widetilde c_4  \|B\| \| \widehat p\|  
  + \CS  \|u\| (|\widehat \theta| + |\widehat \theta_1|) 
\Bigg).\nonumber 
\end{align}
Therefore,
\begin{align*}
\|e\| & \leq \eps_M \frac{ \|B^{-1}\|}{1-\eps_M \widetilde c_3 \kappa(B)}  \Bigg(  (1+\CS  + \widetilde c_3) \|B\| \|x\|  +
\widetilde c_4  \|B\| \| \widehat p\|   + \CS  \|u\| (|\widehat \theta| + |\widehat \theta_1|)   \Bigg)\\
& = \frac{\eps_M}{1-\eps_M \widetilde c_3 \kappa(B)} \Bigg( (1+ \CS  + \widetilde c_3 )  \kappa(B) \|x\|  +
\widetilde c_4 
 \kappa(B) \| \widehat p\| + \widetilde c_5  \kappa(B)\| (|\widehat \theta| + |\widehat \theta_1|) \Bigg)
\end{align*}
Thus, by defining $c_3$, $c_4$, and $c_5$ as in the statement of the theorem we obtain
\begin{equation}
\label{eq:MSM_FerrBnd2}
\| e \|
\le c_3 \eps_M \kappa(B)  \|x\| + c_4 \eps_M \kappa(B)  \|\widehat p\| + c_5 \eps_M \kappa(B) (|\widehat \theta| + |\widehat \theta_1|) 
\end{equation}
Then, $\|x\| \leq \|\widehat x\| + \|e\|$ yields~\eqref{eq:MSM_FerrBnd}.
\end{proof}

Using~\eqref{eq:MSM_FerrBnd2}, a relative forward error bound for MSM could be obtained:
\begin{equation}
 \label{eq:MSM-FerrBnd3}
\frac{\| \widehat x-x \|}{\|x\|}
 \le \eps_M \kappa(B) \Big( c_3 + \frac{c_4 \|\widehat p\| + c_5 (|\widehat \theta| + |\widehat \theta_1|)}{\|x\|}  \Big)
\end{equation}
which could then be made computable in a few different ways. The most straightforward one is using 
$\frac{1}{\|x\|}
    \leq
    \frac{\|B\|}{\|b\|}$ which yields
\begin{equation}
\frac{\| \widehat x-x \|}{\|x\|}
\le  \eps_M \kappa(B) \Big( c_3 + \frac{\|B\|}{\|b\|} \big( c_4 \|\widehat p\| + c_5 (|\widehat \theta| + |\widehat \theta_1|) \big)   \Big) \label{eq:MSM-computable-FerrBnd-0}
\end{equation}
We could instead use~\eqref{solution-norm-bound:eq} which gives
\begin{equation}
\frac{\| \widehat x-x \|}{\|x\|}
\le \eps_M \kappa(B) \Big( c_3 + \frac{c_4 \|\widehat p\| + c_5 (|\widehat \theta| + |\widehat \theta_1|)}{\|\widehat x\| - \|r\| \sigma_{\min}^{-1}(B)}  \Big) 
\label{eq:MSM-computable-FerrBnd}
\end{equation}

A third option would be to divide~\eqref{abs-for-err-msm-temp:eq} by $\|x\|$ obtaining
\begin{align}
\frac{\|\widehat x-x\|}{\|x\|}
&\le 
\frac{\eps_M}{1-\eps_M\widetilde c_3\kappa(B)}
\Bigg(
\frac{
(1+\CS)\|b\|
+\Big((2\CS+6)\|A\|+(2n+4)\|u\|\,\|v\|\Big)\|\widehat p\|
}{\|x\|\sigma_{\min}(B)} \nonumber
\\
&\qquad\qquad
+
\frac{
\CS\|u\|\big(|\widehat\theta|+|\widehat\theta_1|\big)
}{\|x\|\sigma_{\min}(B)}
+\widetilde c_3\kappa(B)
\Bigg) \nonumber \\
&\le 
\frac{\eps_M}{1-\eps_M\widetilde c_3\kappa(B)}
\Bigg(
\frac{
(1+\CS)\|b\|
+\Big((2\CS+6)\|A\|+(2n+4)\|u\|\,\|v\|\Big)\|\widehat p\|
}{\|\widehat x\| \sigma_{\min}(B) - \|r\|  }
\nonumber \\
&\qquad\qquad
+
\frac{
\CS\|u\|\big(|\widehat\theta|+|\widehat\theta_1|\big)
}{\|\widehat x\| \sigma_{\min}(B) - \|r\|  }
+\widetilde c_3\kappa(B)
\Bigg) \label{msm-ferr-bnd-option10:eq}
\end{align}
where the second inequality is based on~\eqref{solution-norm-bound:eq}.

The next result, which will be used in the sequel for obtaining a convenient test for MSM forward stability, follows from~\eqref{eq:MSM-FerrBnd3}. It simply defines $c_6$ such that it is larger than all three quantities $c_3, c_4$ and $c_5$.
\begin{corollary} \normalfont
With the notation of previous theorem, define
$ c_6 := \frac{1+\CS + \widetilde c_4}{1-\eps_M \widetilde c_3 \kappa(B)}$. The solution $\widehat x$ computed by MSM satisfies
\begin{equation}
\label{MSM-ferr-bnd-new:eq}
\frac{\| \widehat x-x \|}{\|x\|}
 \le \eps_M \kappa(B) c_6 \Big( 1 + \frac{\|\widehat p\| + |\widehat \theta| + |\widehat \theta_1|}{\|x\|}  \Big)
\end{equation}
\end{corollary}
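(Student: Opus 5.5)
The plan is to start from the relative bound \eqref{eq:MSM-FerrBnd3}, which already follows from \eqref{eq:MSM_FerrBnd2} after division by $\|x\|$, and to show that each of the three constants $c_3,c_4,c_5$ is at most $c_6$. Once that is done, the right-hand side of \eqref{eq:MSM-FerrBnd3} is bounded by
\[
\eps_M\kappa(B)\Big(c_6 + \frac{c_6\|\widehat p\| + c_6(|\widehat\theta|+|\widehat\theta_1|)}{\|x\|}\Big),
\]
and factoring out $c_6$ gives \eqref{MSM-ferr-bnd-new:eq}.

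All four constants $c_3,c_4,c_5,c_6$ share the positive denominator $1-\eps_M\widetilde c_3\kappa(B)$, which is positive by the hypothesis of Theorem~\ref{thm:MSM_FerrBnd}. So it is enough to compare numerators. Write $\rho := (\|A\|+\|u\|\,\|v\|)/\|B\|$.

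First I use the triangle inequality $\|B\| = \|A+uv^T\|\le \|A\|+\|u\|\,\|v\|$, which gives $\rho\ge 1$. The three comparisons are then as follows.

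\begin{enumerate}
\item For $c_4\le c_6$: the numerator $\widetilde c_4$ is trivially at most $1+\CS+\widetilde c_4$.
\item For $c_3\le c_6$: I need $\widetilde c_3\le\widetilde c_4$. This holds because $(n+\CS+3)\rho\le(2n+2\CS+4)\rho$, which is immediate since $n+\CS+1\ge 0$.
\item For $c_5\le c_6$: I need $\widetilde c_5=\CS\|u\|/\|B\|\le 1+\CS+\widetilde c_4$. It suffices to show $\CS\|u\|/\|B\|\le \widetilde c_4$, which reduces to $\|u\|\le (2n+2\CS+4)(\|A\|+\|u\|\,\|v\|)/\CS$.
\end{enumerate}

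The third comparison is the only delicate step. The difficulty is scaling: $\|u\|$ carries no factor of $\|v\|$, whereas $\widetilde c_4$ involves $\|u\|\,\|v\|$. So the inequality $\widetilde c_5\le \widetilde c_4$ is not homogeneous in the data and can fail, for example when $\|v\|$ and $\|A\|$ are both tiny relative to $\|u\|$.

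I would handle this by noting that $\widetilde c_5$ multiplies $|\widehat\theta|+|\widehat\theta_1|$, which has the units of $v^Tx$. The proper comparison is therefore $\CS\|u\|(|\widehat\theta|+|\widehat\theta_1|)$ against terms of size $\|u\|\,\|v\|\,\|x\|$. Under the normalization implicit in the statement, I would read $c_6$ as dominating $c_5$ whenever $\|u\|\le\|A\|+\|u\|\,\|v\|$, for instance when $\|v\|\ge 1$ after rescaling $u\to \tau u$, $v\to v/\tau$. In that case $\widetilde c_5\le\CS\rho\le\widetilde c_4$.

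If that normalization is not acceptable, I would state the corollary with $c_6$ replaced by $\max(c_3,c_4,c_5)$. This is what the text preceding the corollary indicates is intended ("larger than all three quantities"), and the factoring argument above then goes through verbatim.
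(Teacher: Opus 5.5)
Your plan is the paper's argument. The paper's entire justification is the remark that $c_6$ ``is larger than all three quantities $c_3$, $c_4$ and $c_5$'', followed by factoring \eqref{eq:MSM-FerrBnd3}. Your checks of $c_4\le c_6$ and $c_3\le c_6$ (common positive denominator, and $\widetilde c_3\le\widetilde c_4$) are correct. Your objection to $c_5\le c_6$ is also correct, and it exposes a real flaw in the paper's one-line justification, not in your reasoning. Under $u\mapsto\tau u$, $v\mapsto v/\tau$, the quantities $B$, $\kappa(B)$ and $(\|A\|+\|u\|\,\|v\|)/\|B\|$ are unchanged, so $c_3$, $c_4$, $c_6$ are unchanged. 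Meanwhile $\widetilde c_5=\CS\|u\|/\|B\|$ grows linearly in $\tau$. For example, $A=I_2$, $u=32e_1$, $v=e_2/32$, $\CS=1$ give $\|B\|\approx 1.62$ and $1+\CS+\widetilde c_4\approx 14.4$, but $\widetilde c_5\approx 19.8$, so $c_5>c_6$.

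Where your proposal falls short is the remedy. The normalization $\|v\|\ge 1$ (or $\|u\|\le\|A\|+\|u\|\,\|v\|$) is not without loss of generality. For $\tau$ a power of $2$, the same rescaling leaves $x$ and the computed $\widehat x$, $\widehat p$ exactly unchanged (barring over/underflow), but it divides $\widehat\theta$ and $\widehat\theta_1$ by $\tau$. So the right-hand side of \eqref{MSM-ferr-bnd-new:eq} changes, and your normalization is an extra hypothesis. The $\max(c_3,c_4,c_5)$ version is correct but is a different statement. Neither proves the corollary as written.

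Letting $\tau\to\infty$ shows that the literal statement effectively asserts the bound with the $\theta$-terms deleted. What is missing is therefore not a comparison of constants but an estimate of the $c_5$-term (which comes from $\Delta g$) in terms of the $\|x\|$- and $\|\widehat p\|$-terms. Such an estimate is available from the backward error analysis:
\eqref{eq:z-solve} and \eqref{eq:z3} give $\|u\|\,|\widehat\theta_1|\le(1+\mathcal O(\eps_M))\|A\|\,\|\widehat z\|\,|\widehat\theta_1|\le(1+\mathcal O(\eps_M))\|A\|\,\|\widehat p\|$, and \eqref{eq:theta-identity-s} gives $|\widehat\theta|\le(1+\mathcal O(n\eps_M))(\|v\|\,\|\widehat x\|+|\widehat\theta_1|)+\mathcal O(n\eps_M)\|v\|\,\|\widehat z\|\,|\widehat\theta_1|$. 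Hence $\CS\eps_M\|u\|(|\widehat\theta|+|\widehat\theta_1|)\le\CS\eps_M(\|u\|\,\|v\|\,\|\widehat x\|+2\|A\|\,\|\widehat p\|)+\mathcal O(\eps_M^2)$, which is scale-invariant. Writing $\|\widehat x\|\le\|x\|+\|\widehat x-x\|$ and absorbing yields a bound of the corollary's shape. The constants must be re-derived, however; for instance, the denominator becomes $1-\eps_M(\widetilde c_3+\CS\|u\|\,\|v\|/\|B\|)\kappa(B)$.
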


\subsection{Discussion of error bounds and growth factors}
Here is a summary of what can be inferred from the error bounds established in previous sections. Our aim here is to present the results in a form that is a posteriori and easily computable. 

For both SM and MSM, we compare our backward and forward error bounds with the ideal error bounds that would guarantee backward or forward stability. We then assemble the extra terms in our bounds and introduce factors that explain how much our SM and MSM backward and forward error bounds degrade from the ideal bounds. We call them growth factors analogous to the pivot growth factors in Gaussian elimination which reveal when GE is stable. The SM and MSM backward and forward growth factors are all ratios that should ideally be $\mathcal{O}(1)$. We illustrate their use in the numerical experiments reported in the next section.

To derive the growth factors (for the forward error bounds) we drop terms in $c_j$ that depend on $n$ but extract terms that depend on $\|A\|$, $\|u\|$, $\|v\|$ and $\|B\| = \|A+uv^T\|$.
It turns out that even when the input problems are so ill-conditioned that some of the constants $c_j$ 
become negative, the growth factors may still correctly predict the stability behaviour of the methods. 

Recall that an algorithm for solving $Bx=b$ is backward stable, if its relative backward error is no larger --- up to a modest constant --- than machine epsilon, i.e., its computed solution $\widehat x$ satisfies $\omega(\hat{x}) = \mathcal{O} (\eps_M)$. Also, an algorithm for solving $Bx=b$ is said to be forward stable, if its relative forward error is no worse than what a backward stable method would produce, i.e., when the computed solution $\widehat x$ satisfies 
\[
\frac{\|\widehat x - x\|}{\|x\|} = \mathcal{O} (\eps_M \kappa(B)).
\]
Recall also that a forward stable method need not be backward stable and indeed, in the case of SM, we will see examples where it is forward stable, while not backward stable (see e.g., Figure~\ref{fig:ill_ill_smallNorm_tridiag}). In general, backward stability implies forward stability, but not vice versa. Another example of a method that is forward stable but not backward stable is Cramer's rule for solving a $2 \times 2$ linear system~\cite[p. 9]{Higham02}.

\begin{itemize}
\item According to~\eqref{newBackErrBnd1:eq}, SM is backward stable if the SM growth factor satisfies
\begin{equation}
\label{SMbackStabCriterion:eq}
\Gamma_{\rm{SM}}^{\rm{bw}} := \frac{\big( \|A\| + \|u\| \|v\| \big)\,\|\widehat y\|}{ \|A+u v^{T}\|\, \|\widehat x\| + \|b\|} = \mathcal{O} (1).
\end{equation}

\item We now discuss forward stability of SM. First, considering~\eqref{SM_relative_ferr0:eq} we can conclude that SM is forward stable when $\widecheck c_2 \|B\| \frac{\|\widehat y\|}{\|b\|}$ is $\mathcal{O} (1)$. Accounting for the non-trivial term $\frac{\|A\| + \|u\| \|v\|}{\|B\|}$ in ($\widetilde c_2$ and hence also in) $\widecheck c_2$ suggests checking if 
\begin{equation}
\label{SMforStabCriterion0:eq}
\Gamma_{\rm{SM}}^{\rm{fw}} := \frac{\big( \|A\| + \|u\| \|v\| \big) \|\widehat y\| }{\|b\|} = \mathcal{O} (1)
\end{equation}
holds. Similarly, with~\eqref{SM_relative_ferr:eq} one might look into the quantity
\[
\frac{\|A\| + \|u\| \|v\|}{\|B\|}  \frac{\|\widehat y\|}{\|\widehat x\| - \|r\| \sigma_{\min}^{-1}(A+uv^T)}  
\]
to conclude that if
\begin{equation}
\label{SMforStabCriterion:eq}
\Gamma_{\rm{SM}}^{\rm{fw}} := \frac{(\|A\| + \|u\| \|v\|) \|\widehat y\|}{\|A+uv^T\| \|\widehat x\| - \|r\| \kappa(A+uv^T)}  = \mathcal{O} (1)
\end{equation}
then SM is forward stable, where the denominator is assumed to be positive (otherwise the residual has not been small enough for the bound to carry useful information about $\|x\|$). Note that the denominator is a computable lower bound on $\|A+uv^T\| \|x\|$ and\footnote{obtained from a reverse-triangle estimate of $\|x\|$ in terms of $x-\widehat x=(A+uv^T)^{-1} r$ and multiplying through with $\|A+uv^T\|$.} essentially a substitute for $\|b\|$ once we account for the fact that $\widehat x \ne x$.

\item According to~\eqref{MSM_BackErrBnd:eq}, MSM is backward stable if the correction vector $\widehat p$ satisfies $\|\widehat p\| = \mathcal{O} (\|\widehat x\|)$ and the MSM growth factor satisfies
\begin{equation}
\label{MSMbackStabCriterion:eq}
\Gamma_{\rm{MSM}}^{\rm{bw}} :=  \frac{\|u\| \big( |\widehat \theta| + |\widehat \theta_1| \big) + \|b\|}{ \|A+u v^{T}\|\, \|\widehat x\| + \|b\|} = \mathcal{O} (1).
\end{equation}

\item We can combine~\eqref{MSM-ferr-bnd-new:eq} with bounds of the form $\frac{1}{\|x\|} \leq \frac{\|B\|}{\|b\|}$ or with~\eqref{solution-norm-bound:eq} to derive sufficient conditions for the forward stability of MSM that are convenient to check. We also make explicit the factor $\frac{\|A\| + \|u\| \|v\|}{\|A+uv^T\|}$ which is hidden in $c_6$. The conclusion is that MSM is forward stable if either
\begin{equation}
\label{MSMforStabCriterion1:eq}
\Gamma_{\rm{MSM}}^{\rm{fw}} := (\|A\| + \|u\| \|v\|) \frac{ \|\widehat p\| + |\widehat \theta| + |\widehat \theta_1|}{\|b\|} = \mathcal{O}(1)
\end{equation}
or
\begin{equation}
\label{MSMforStabCriterion:eq}
\Gamma_{\rm{MSM}}^{\rm{fw}} := (\|A\| + \|u\| \|v\|) \frac{ \|\widehat p\| + |\widehat \theta| + |\widehat \theta_1|}
{\|A+uv^T\| \|\widehat x\| - \|r\| \kappa(A+uv^T) } = \mathcal{O}(1)
\end{equation}
where in the last one, the denominator is assumed to be positive.
\end{itemize}

\section{Experiments}
We follow the setup introduced in~\cite{SM25}, where $A$ and $A+uv^T$ have varying condition numbers. To illustrate the performance of our new algorithm, MSM, we focus mainly on cases in which the solution has modest norm; in other cases, we observe that the standard SM is backward stable. As a reference, we report results obtained by applying MATLAB backslash to the matrix $B=A+uv^T$. In our setup, $B$ is unstructured (i.e. not sparse etc), and this approach is therefore always the slowest. To solve linear systems with $A$, we use LU decomposition with partial pivoting in SM, SMIR and MSM.

In the following plots, the horizontal axis reports the condition numbers of both $A$ and $B$. For instance, $10^{6, 12}$ means that $\kappa_2(A) \approx 10^6$ while $\kappa_2(B) \approx 10^{12}$.

The numerical results reported in the tables provide details of the backward and forward errors, together with the typical behaviour of our bounds. All these results use $\CS=1$. 
In the case of the backward error, the observed values are computed with~\eqref{eq:backErrB_def} while the SM and MSM backward error bounds refer to~\eqref{newBackErrBnd1:eq} and~\eqref{MSM_BackErrBnd:eq}, respectively. The SM forward error bound reported in the tables is the smallest among the three options~\eqref{SM_relative_ferr0:eq},~\eqref{SM_relative_ferr:eq} and~\eqref{sm-ferr-bnd-option3:eq}. Similarly, in the case of MSM, the forward error bound reported is the smallest obtained from among~\eqref{eq:MSM-computable-FerrBnd-0},~\eqref{eq:MSM-computable-FerrBnd} and~\eqref{msm-ferr-bnd-option10:eq}.

In the tables, in addition to the errors and their bounds, we report the numerator and denominator of
the growth factors $\Gamma_{\rm{SM}}^{\rm{bw}}$, $\Gamma_{\rm{MSM}}^{\rm{bw}}$, $\Gamma_{\rm{SM}}^{\rm{fw}}$ and $\Gamma_{\rm{MSM}}^{\rm{fw}}$ as defined in the last section. Note that a cross mark in the last column means only that the ratio obtained from the upper bound is not modest; it does not necessarily indicate a lack of (backward/forward) stability. By contrast, a tick mark means that the ratio is modest, and therefore implies stability. 

\begin{example} \label{ex:ill_ill_smallNorm_tridiag} \normalfont
We generate $n \times n$ random tridiagonal matrices $A$ with $n=4000$, choosing the parameters so that both $A$ and $B$ are ill-conditioned. The exact solution $x$, as well as the vectors $u$ and $v$, are taken to have normally distributed random entries generated with the MATLB command \texttt{randn}, leading to a solution of modest norm, and the right-hand side $b$ is then formed accordingly.

The results are depicted in Figure~\ref{fig:ill_ill_smallNorm_tridiag}. While SM is not backward stable, both MSM and SMIR are computing backward stable solutions while the latter requires up to six IR steps which slightly affects its performance. We also observe that all methods are forward stable.

\begin{figure}[!h]
\center
\includegraphics[width=0.95\textwidth]{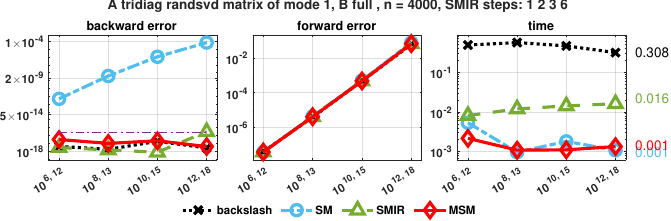}
\caption{Results for Example~\ref{ex:ill_ill_smallNorm_tridiag}: $\kappa(A), \kappa(A+uv^T) \gg 1$, modest-norm solution}
\label{fig:ill_ill_smallNorm_tridiag}
 \end{figure}

\begin{table}[!h]
\centering
\scriptsize
\caption{Backward errors, computed upper bounds, and their characteristic ingredients for the second problem in Example~\ref{ex:ill_ill_smallNorm_tridiag}. Here, $\kappa(A) \approx 10^8$ and $\kappa(A+uv^T) \approx 10^{13}$. See Fig.~\ref{fig:ill_ill_smallNorm_tridiag}.}
\label{tab:backward_errors-ill_ill_smallNorm_tridiag:tab}
\renewcommand{\arraystretch}{1.35}
\begin{tabular}{@{}lcc>{\raggedright\arraybackslash}p{0.30\textwidth}c@{}}
\toprule
Alg.
& Observed
& Bound
& Formula / check
& Value \\
\midrule
\multirow{3}{*}{SM}
& \multirow{3}{*}{$3.7 \times 10^{-9}$}
& \multirow{3}{*}{$1.6 \times 10^{-3}$}
& $\big( \|A\| + \|u\| \|v\| \big)\,\|\widehat y\|$
& $2.3 \times 10^{14}$ \\
&
&
& $\|A+u v^{T}\|\, \|\widehat x\| + \|b\|$
& $2.6 \times 10^{5}$ \\
&
&
& Is the ratio $\Gamma_{\rm{SM}}^{\rm{bw}} = \mathcal{O}(1)$?
& \xmark \\
\midrule
\multirow{3}{*}{MSM}
& \multirow{3}{*}{$9.8 \times 10^{-18}$}
& \multirow{3}{*}{$1.8 \times 10^{-12}$}
& $ \|u\| \big( |\widehat \theta| + |\widehat \theta_1| \big) + \|b\|$
& $1.2 \times 10^{3}$ \\
&
&
& $ \|A+u v^{T}\|\, \|\widehat x\| + \|b\|$
& $2.6 \times 10^{5}$ \\
&
&
& Is the ratio $\Gamma_{\rm{MSM}}^{\rm{bw}} = \mathcal{O}(1)$?
& \checkmark \\
\bottomrule
\end{tabular}
\end{table}

\begin{table}[!h]
\centering
\scriptsize
\caption{Forward errors, computed upper bounds, and their characteristic ingredients in the second problem of Example~\ref{ex:ill_ill_smallNorm_tridiag}. A dash indicates that the bound is invalid because one of the constants $c_j$ involved is negative. More precisely, required assumptions such as $\tilde c_3 \eps_M \kappa(A+uv^T) < 1$ do not hold because the test problem is simply too ill-conditioned, which is also why, for SM in this example, $\|A+uv^T\| \|\widehat x\| - \|r\| \kappa(A+uv^T)$ is negative. Therefore, in the last column, we report characteristic values in~\eqref{SMforStabCriterion0:eq} which correspond to the bound~\eqref{SM_relative_ferr0:eq}, rather than to~\eqref{SM_relative_ferr:eq}. See Fig.~\ref{fig:ill_ill_smallNorm_tridiag}.}
\label{tab:forward_errors-ill_ill_smallNorm_tridiag:tab}
\renewcommand{\arraystretch}{1.35}
\begin{tabular}{@{}lcc>{\raggedright\arraybackslash}p{0.45\textwidth}c@{}}
\toprule
Alg.
& Observed
& Bound
& Formula / check
& Value \\
\midrule
\multirow{3}{*}{SM}
& \multirow{3}{*}{$4.3 \times 10^{-6}$}
& \multirow{3}{*}{$-$}
& $ \|A+uv^T\| \|\widehat y\|$
& $2.3 \times 10^{14}$ \\
& & & $ \|b\|$ & $5.8 \times 10^{2}$ \\
& is forward stable
&
& Is the ratio $\Gamma_{\rm{SM}}^{\rm{fw}} = \mathcal{O}(1)$?
& \xmark \\
\midrule
\multirow{3}{*}{MSM}
& \multirow{3}{*}{$4.1 \times 10^{-6}$}
& \multirow{3}{*}{$-$}
& $(\|A\| + \|u\| \|v\|) \big(\|\widehat p\| + |\widehat \theta| + |\widehat \theta_1|\big)$
& $3.7 \times 10^4$ \\
& 
&
& $\|A+uv^T\| \|\widehat x\| - \|r\| \kappa(A+uv^T)$
& $2.6 \times 10^{5}$ \\
& is forward stable
&
& Is the ratio $\Gamma_{\rm{MSM}}^{\rm{fw}} = \mathcal{O}(1)$?
& \checkmark \\
\bottomrule
\end{tabular}
\end{table}

\end{example}

\newpage

\begin{example} \label{ex:ill_ill_largeNorm_tridiag} \normalfont
In this example, we first generate the right-hand side vector $b$ with entries drawn from a normal distribution. The main distinction from the previous example is that the solution now has a large norm. In this regime, SM already performs backward stably, and SMIR does not require any extra refinement steps. As Figure~\ref{fig:ill_ill_largeNorm_tridiag} illustrates, MSM is also backward stable. All methods are forward stable.
\begin{figure}[!h]
\center
\includegraphics[width=0.95\textwidth]{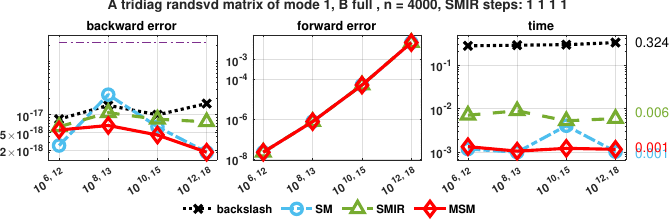}
\caption{Results for Example~\ref{ex:ill_ill_largeNorm_tridiag}: $\kappa(A), \kappa(A+uv^T) \gg 1$, large-norm solution}
\label{fig:ill_ill_largeNorm_tridiag}
 \end{figure}

 \begin{table}[!h]
\centering
\caption{Backward errors, computed upper bounds, and their characteristic ingredients in the second problem of Example~\ref{ex:ill_ill_largeNorm_tridiag}. Here, again $\kappa(A) \approx 10^8$ and $\kappa(A+uv^T) \approx 10^{13}$, but the solution is large-normed ($\|\widehat x\| \approx 10^9$). See Fig.~\ref{fig:ill_ill_largeNorm_tridiag}.}
\label{tab:backward_errors-ill_ill_largeNorm_tridiag:tab}
\scriptsize
\renewcommand{\arraystretch}{1.35}
\begin{tabular}{@{}lcc>{\raggedright\arraybackslash}p{0.30\textwidth}c@{}}
\toprule
Alg.
& Observed
& Bound
& Formula / check
& Value  \\
\midrule
\multirow{3}{*}{SM}
& \multirow{3}{*}{$2.4 \times 10^{-17}$}
& \multirow{3}{*}{$2.7 \times  10^{-12}$}
& $\big( \|A\| + \|u\| \|v\| \big)\,\|\widehat y\|$
& $2.5 \times 10^{13}$ \\
&
&
& $\|A+u v^{T}\|\, \|\widehat x\| + \|b\|$
& $2.6 \times 10^{13}$ \\
&
&
& Is the ratio $\Gamma_{\rm{SM}}^{\rm{bw}} = \mathcal{O}(1)$?
& \checkmark  \\
\midrule
\multirow{3}{*}{MSM}
& \multirow{3}{*}{$6.3 \times 10^{-18}$}
& \multirow{3}{*}{$1.8 \times 10^{-12}$}
& $ \|u\| \big( |\widehat \theta| + |\widehat \theta_1| \big) + \|b\|$
& $7.2 \times 10^1$ \\
&
&
& $\|A+u v^{T}\|\, \|\widehat x\| + \|b\|$
& $2.6 \times 10^{13}$ \\
&
&
& Is the ratio $\Gamma_{\rm{MSM}}^{\rm{bw}} = \mathcal{O}(1)$?
& \checkmark \\
\bottomrule
\end{tabular}
\end{table}

\begin{table}[!h]
\centering
\caption{Forward errors, computed upper bounds, and their characteristic ingredients in Example~\ref{ex:ill_ill_largeNorm_tridiag} in which solutions have a large norm. See Fig.~\ref{fig:ill_ill_largeNorm_tridiag}.}
\label{tab:forward_errors-ill_ill_largeNorm_tridiag:tab}
\scriptsize
\renewcommand{\arraystretch}{1.35}
\begin{tabular}{@{}lcc>{\raggedright\arraybackslash}p{0.40\textwidth}c@{}}
\toprule
Alg.
& Observed
& Bound
& Formula / check
& Value \\
\midrule
\multirow{3}{*}{SM} & \multirow{3}{*}{$7.6 \times 10^{-7}$} & \multirow{3}{*}{$-$} & $\big( \|A\| + \|u\| \|v\| \big) \|\widehat y\| $ & $2.5 \times 10^{13}$ \\
& 
&
& $\|A+uv^T\| \|\widehat x\| - \|r\| \kappa(A+uv^T)$
& $2.6 \times 10^{13}$ \\
& is forward stable
&
& Is the ratio $\Gamma_{\rm{SM}}^{\rm{fw}} = \mathcal{O}(1)$?
& \checkmark \\
\midrule
\multirow{3}{*}{MSM}
& \multirow{3}{*}{$7.6 \times 10^{-7}$}
& \multirow{3}{*}{$-$}
& $ (\|A\| + \|u\| \|v\|) \big(\|\widehat p\| + |\widehat \theta| + |\widehat \theta_1|\big)$
& $4.8 \times 10^{2}$ \\
&
&
& $\|A+uv^T\| \|\widehat x\| - \|r\| \kappa(A+uv^T)$
& $2.6 \times 10^{13}$ \\
& is forward stable
&
& Is the ratio $\Gamma_{\rm{MSM}}^{\rm{fw}} = \mathcal{O}(1)$?
& \checkmark \\
\bottomrule
\end{tabular}
\end{table}

\end{example}

\newpage

\begin{example} \label{ex:well_ill_smallNorm}  \normalfont
In the four test problems in this experiment, the condition number of $A$ takes the values ${10,10^2,10^3,10^4}$, while $B$ is more ill-conditioned. We set $n = 1000$. As Figure~\ref{fig:well_ill_smallNorm} shows, the backward stability of SM deteriorates as $A$ becomes more ill-conditioned, whereas both SMIR and MSM remain backward stable across all test problems, with MSM being the faster method.
\begin{figure}[!h]
\center
\includegraphics[width=0.95\textwidth]{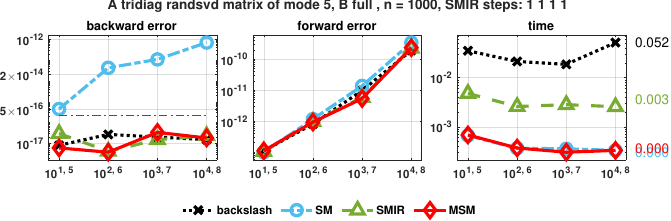}
\caption{Results for Example~\ref{ex:well_ill_smallNorm}: $\kappa(A)= \Theta(1), \kappa(A+uv^T)\gg 1$, modest-norm solution}
\label{fig:well_ill_smallNorm}
 \end{figure}

  \begin{table}[!h]
\centering
\caption{Backward errors, computed upper bounds, and their characteristic ingredients in the second problem of Example~\ref{ex:well_ill_smallNorm}. Here, $\kappa(A) \approx 10^2$ and $\kappa(A+uv^T) \approx 10^{6}$. Notice, in this example, SM is beginning to lose backward stability and although the backward error is not yet too large, the SM solution is actually not backward stable. See the blue curve in Fig.~\ref{fig:well_ill_smallNorm}.}
\label{tab:backward_errors-well_ill_smallNorm:tab}
\scriptsize
\renewcommand{\arraystretch}{1.35}
\begin{tabular}{@{}lcc>{\raggedright\arraybackslash}p{0.30\textwidth}c@{}}
\toprule
Alg.
& Observed
& Bound
& Formula / check
& Value \\
\midrule
\multirow{3}{*}{SM}
& \multirow{3}{*}{$4.3 \times 10^{-14}$}
& \multirow{3}{*}{$4.8 \times 10^{-10}$}
& $ \big( \|A\| + \|u\| \|v\| \big)\,\|\widehat y\|$
& $3.5 \times 10^{7}$ \\
&
&
& $\|A+u v^{T}\|\, \|\widehat x\| + \|b\|$
& $3.2 \times 10^{4}$ \\
&
&
& Is the ratio $\Gamma_{\rm{SM}}^{\rm{bw}} = \mathcal{O}(1)$?
& \xmark \\
\midrule
\multirow{3}{*}{MSM}
& \multirow{3}{*}{$3.8 \times 10^{-18}$}
& \multirow{3}{*}{$4.3 \times 10^{-13}$}
& $ \|u\| \big( |\widehat \theta| + |\widehat \theta_1| \big) + \|b\|$
& $1.9 \times 10^3$ \\
&
&
& $ \|A+u v^{T}\|\, \|\widehat x\| + \|b\|$
& $3.2 \times 10^4$ \\ &
& & Is the ratio $\Gamma_{\rm{MSM}}^{\rm{bw}} = \mathcal{O}(1)$?
& \checkmark \\
\bottomrule
\end{tabular}
\end{table}

\begin{table}[!h]
\centering
\caption{Forward errors, computed upper bounds, and their characteristic ingredients in the second problem of Example~\ref{ex:well_ill_smallNorm}. See Fig.~\ref{fig:well_ill_smallNorm}.}
\label{tab:forward_errors-well_ill_smallNorm:tab}
\scriptsize
\renewcommand{\arraystretch}{1.35}
\begin{tabular}{@{}lcc>{\raggedright\arraybackslash}p{0.40\textwidth}c@{}}
\toprule
Alg.
& Observed
& Bound
& Formula / check
& Value \\
\midrule
\multirow{3}{*}{SM}
& \multirow{3}{*}{$1.2 \times 10^{-12}$}
& \multirow{3}{*}{$7.2 \times 10^{-4}$}
& $\big( \|A\| + \|u\| \|v\| \big) \|\widehat y\| $
& $3.5 \times 10^7$ \\
&
&
& $\|A+uv^T\| \|\widehat x\| - \|r\| \kappa(A+uv^T)$
& $3.1 \times 10^4$ \\
& is forward stable
&
& Is the ratio $\Gamma_{\rm{SM}}^{\rm{fw}} = \mathcal{O}(1)$?
& \xmark \\
\midrule
\multirow{3}{*}{MSM}
& \multirow{3}{*}{$9.3 \times 10^{-13}$}
& \multirow{3}{*}{$3.3 \times 10^{-7}$}
& $ (\|A\| + \|u\| \|v\|) \big(\|\widehat p\| + |\widehat \theta| + |\widehat \theta_1|\big)$
& $3.0 \times 10^4$ \\
&
&
& $\|A+uv^T\| \|\widehat x\| - \|r\| \kappa(A+uv^T)$
& $3.1 \times 10^4$ \\
& is forward stable
&
& Is the ratio $\Gamma_{\rm{MSM}}^{\rm{fw}} = \mathcal{O}(1)$?
& \checkmark \\
\bottomrule
\end{tabular}
\end{table}

\end{example}

\newpage

\begin{example} \label{ex:ill_well_smallNorm}  \normalfont
 We now examine the case in which $B$ is well-conditioned, while $A$ is ill-conditioned and the solution has modest norm. As Figure~\ref{fig:ill_well_smallNorm} shows, the standard SM is neither backward stable, nor forward stable, whereas MSM (and SMIR) are both backward and forward stable.
\begin{figure}[!h]
\center
\includegraphics[width=0.95\textwidth]{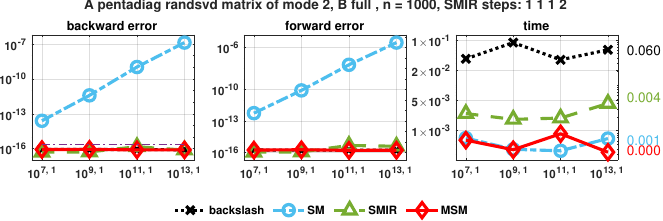}
\caption{Results for Example~\ref{ex:ill_well_smallNorm}: $\kappa(A)\gg 1, \kappa(A+uv^T) = \Theta(1)$, modest-norm solution}
\label{fig:ill_well_smallNorm}
 \end{figure}

\begin{table}[!h]
\centering
\caption{Backward errors, computed upper bounds, and their characteristic ingredients in the second problem of Example~\ref{ex:ill_well_smallNorm}. See normwise backward errors in Fig.~\ref{fig:ill_well_smallNorm}.}
\label{tab:backward_errors-ill_well_smallNorm:tab}
\scriptsize
\renewcommand{\arraystretch}{1.35}
\begin{tabular}{@{}lcc>{\raggedright\arraybackslash}p{0.30\textwidth}c@{}}
\toprule
Alg.
& Observed
& Bound
& Formula / check
& Value \\
\midrule
\multirow{3}{*}{SM}
& \multirow{3}{*}{$3.9\times 10^{-12}$}
& \multirow{3}{*}{$5.9 \times 10^{-7}$}
& $ \big( \|A\| + \|u\| \|v\| \big)\,\|\widehat y\|$
& $8.3 \times 10^7$ \\
&
&
& $ \|A+u v^{T}\|\, \|\widehat x\| + \|b\|$
& $6.3 \times 10^1$ \\
&
&
& Is the ratio $\Gamma_{\rm{SM}}^{\rm{bw}} = \mathcal{O}(1)$?
& \xmark \\
\midrule
\multirow{3}{*}{MSM}
& \multirow{3}{*}{$8.6 \times 10^{-17}$}
& \multirow{3}{*}{$2.4 \times 10^{-13}$}
& $ \|u\| \big( |\widehat \theta| + |\widehat \theta_1| \big) + \|b\|$
& $3.2 \times 10^{1}$ \\
&
&
& $ \|A+u v^{T}\|\, \|\widehat x\| + \|b\|$
& $6.3 \times 10^1$ \\
&
&
& Is the ratio $\Gamma_{\rm{MSM}}^{\rm{bw}} = \mathcal{O}(1)$?
& \checkmark \\
\bottomrule
\end{tabular}
\end{table}

 \begin{table}[!h]
\centering
\caption{Forward errors, computed upper bounds, and their characteristic ingredients in the second problem of Example~\ref{ex:ill_well_smallNorm}. Here, $\kappa(A) \approx 10^9$ while $\kappa(A+uv^T) \approx 10^{1}$. See forward errors in Fig.~\ref{fig:ill_well_smallNorm}.}
\label{tab:forward_errors-ill_well_smallNorm:tab}
\scriptsize
\renewcommand{\arraystretch}{1.35}
\begin{tabular}{@{}lcc>{\raggedright\arraybackslash}p{0.40\textwidth}c@{}}
\toprule
Alg.
& Observed
& Bound
& Formula / check
& Value \\
\midrule
\multirow{3}{*}{SM}
& \multirow{3}{*}{$8.1\times 10^{-11}$}
& \multirow{3}{*}{$1.1 \times 10^{-6}$}
& $\big( \|A\| + \|u\| \|v\| \big) \|\widehat y\| $
& $8.3 \times 10^7$ \\
&
&
& $\|A+uv^T\| \|\widehat x\| - \|r\| \kappa(A+uv^T)$
& $3.2 \times 10^1$ \\
& not forward stable
&
& Is the ratio $\Gamma_{\rm{SM}}^{\rm{fw}} = \mathcal{O}(1)$?
& \xmark \\
\midrule
\multirow{3}{*}{MSM}
& \multirow{3}{*}{$1.8 \times 10^{-16}$}
& \multirow{3}{*}{$2.7\times 10^{-12}$}
& $ (\|A\| + \|u\| \|v\|) \big(\|\widehat p\| + |\widehat \theta| + |\widehat \theta_1|\big)$
& $4.6 \times 10^{-1}$ \\
&
&
& $\|A+uv^T\| \|\widehat x\| - \|r\| \kappa(A+uv^T)$
& $3.2 \times 10^{+1}$ \\
& is forward stable
&
& Is the ratio $\Gamma_{\rm{MSM}}^{\rm{fw}} = \mathcal{O}(1)$?
& \checkmark \\
\bottomrule
\end{tabular}
\end{table}

\end{example}

\subsection{Does small capacitance cause backward instability in SM and MSM?}\label{smallBeta:sec}
As noted in Section 1, the focus of Yip's work~\cite{Yip86} is on the {\em forward} stability of SM. Yip examines situations in which the capacitance $\beta$ is close to zero (or, in the rank-$k$ perturbation case with $k>1$, nearly singular). A natural question is whether a small capacitance also degrades the {\em backward} error, particularly in light of the condition~\eqref{hypoth1:eq} required by the SM backward error bound established in our earlier work~\cite{SM25}. 

Two features of our analysis suggest that the backward error may tell a different story. First, while we still require $\beta \neq 0$ (otherwise $B$ is singular), the bounds in Section 2 are derived without an assumption like~\eqref{hypoth1:eq}. Second, while one may imagine small capacitance causes the quantity $\theta = \alpha / \beta$ to blow up, it does not appear in the new backward error representation. One is therefore led to expect a marked contrast with the forward error behaviour.

The MSM picture is, at first sight, more delicate. Unlike the SM bound, our MSM bound does involve the capacitance: it appears in the denominators of both $\widehat \theta$ and $\widehat \theta_1$. This raises a natural concern. If one deliberately chooses $u$ or $v$ so that $\beta = 1+v^TA^{-1}u\approx 0$, does the MSM backward error suffer? We test this in slightly different ways in the following two examples.

\begin{example} \label{sing_capacitance:ex}  \normalfont
We fix a $500 \times 500$ matrix $A$ with normally distributed random entries and $\kappa(A) = 10^5$, together with random vectors $v$, $x$ and $z_0$. The vector $x$ is taken to be the exact solution and has modest norm, here $\|x\| \approx 21.8$. We then generate 100 test problems with $10^{-16} \leq \delta_j \leq 10^{-1}$ for $j=1, 2, \dots, 100$. For each $j$ we define $z:= \frac{-1+\delta_j}{v^T z_0} z_0$ and then set $u:= A z$ and for the right-hand side vector $b := (A+uv^T) x$. With this construction, the capacitance in the $j$-th problem instance is equal to $\delta_j$ and these values are shown on the horizontal axis in Figure~\ref{fig:SM_MSM_den_zero_backErr}. The observed backward errors for the solutions computed by SM, MSM and MATLAB backslash applied to $B$ are then depicted together with the SM and MSM backward error bounds~\eqref{newBackErrBnd1:eq} and~\eqref{MSM_BackErrBnd:eq}, respectively.
\begin{figure}[!h]
\center
\includegraphics[width=0.7\textwidth]{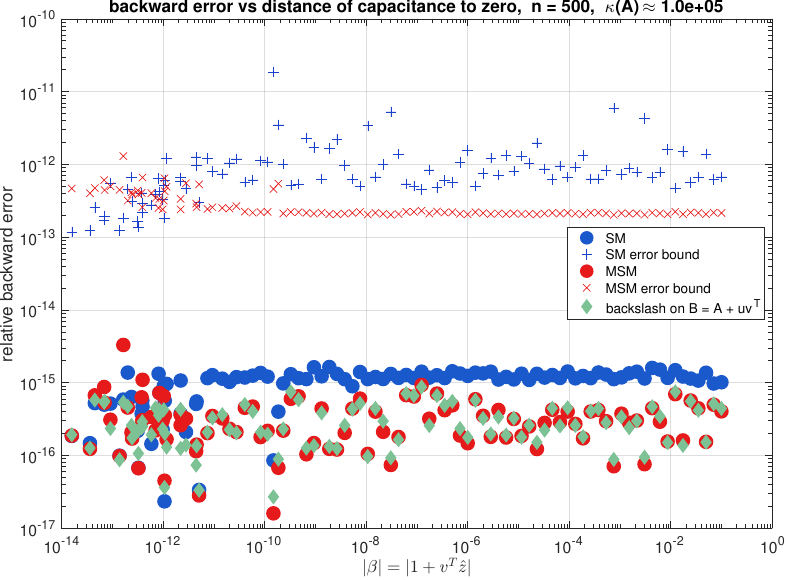}
\caption{Backward error of different methods for  Example~\ref{sing_capacitance:ex}.}
\label{fig:SM_MSM_den_zero_backErr}
 \end{figure}

We observe that, even in this artificial construction, the MSM backward error does not grow --- the largest such value among all 100 problems is $3.3 \times 10^{-15}$. Similarly, the largest SM and backslash backward errors are $1.6 \times 10^{-15}$ and $8.7 \times 10^{-16}$, respectively.

Although $\widehat \beta$ is tiny, the numerator $\widehat \alpha$ shrinks in proportion, leaving $\widehat \theta$ at order one. A concrete run illustrates this: $\widehat \beta \approx 3.3 \times 10^{-13}$ together with $\widehat \alpha \approx 7.8 \times 10^{-12}$ produces $\widehat \theta \approx 23.8$. The resulting SM solution has only a modest norm; it is not accurate in absolute terms --- the residual norm is about $10^{-8}$ --- yet, with $B$ and $b$ both of order $10^6$, the absolute error of $10^{-8}$ is absorbed in the denominator of the SM relative backward error and the latter remains acceptable. For reference, the same run gives $\|\widehat y\| \approx 52$ and $\|\widehat z\| \approx 0.9$. Regarding MSM on the same instance, the correction numerator $v^T w - \theta$ comes out at roughly $2.3 \times 10^{-11}$, which is no surprise since this quantity is zero in exact arithmetic. The aforementioned small magnitude is exactly what is needed to balance the equally small capacitance $\widehat \beta$, with the result that $\widehat \theta_1\approx 69.8$ --- once again of order one.

Even though cancellation occurs in the addition $1+v^T\widehat z$ with a tiny $\widehat \beta$, in the case of SM, we do not slip into the instability regime described in~\cite{SM25}, where the intermediate vectors $\widehat y$ and $\widehat z$ grow large while the final solution $\widehat x$ stays of modest norm. In short, a small capacitance is not, in itself, a mechanism for inflating $\| \widehat y\|$ and $\|\widehat z\|$ while keeping $\|\widehat x\|$ modest, and the SM solution, too, remains backward stable.

It is worth emphasising that a small capacitance is essentially a statement about the problem rather than about the algorithm: it signals that $B = A + uv^T$ is ill-conditioned. Forward error growth in this regime is therefore unavoidable and would be observed with any solver, including, for instance, MATLAB's backslash applied to $B$. In this experiment, the condition number of $B$ varies between $10^6$ and $10^{19}$.
\end{example}

\begin{example}\label{sing_capacitance2:ex}  \normalfont
This time we take a fixed \texttt{randsvd} matrix of the same size as before with $\kappa(A) \approx 10^{10}$, together with fixed random vectors $u$, $b$ and $v_0$. For the same values of $\delta_j$ as in the previous example, we set $v := \frac{-1 + \delta_j}{v_0^T A^{-1} u}\, v_0$, so that $\beta_j = 1 + v^T A^{-1} u \approx \delta_j$ as before. The results are shown in Figure~\ref{fig:SM_MSM_den_zero_backErr2}.
\begin{figure}[!h]
\center
\includegraphics[width=0.8\textwidth]{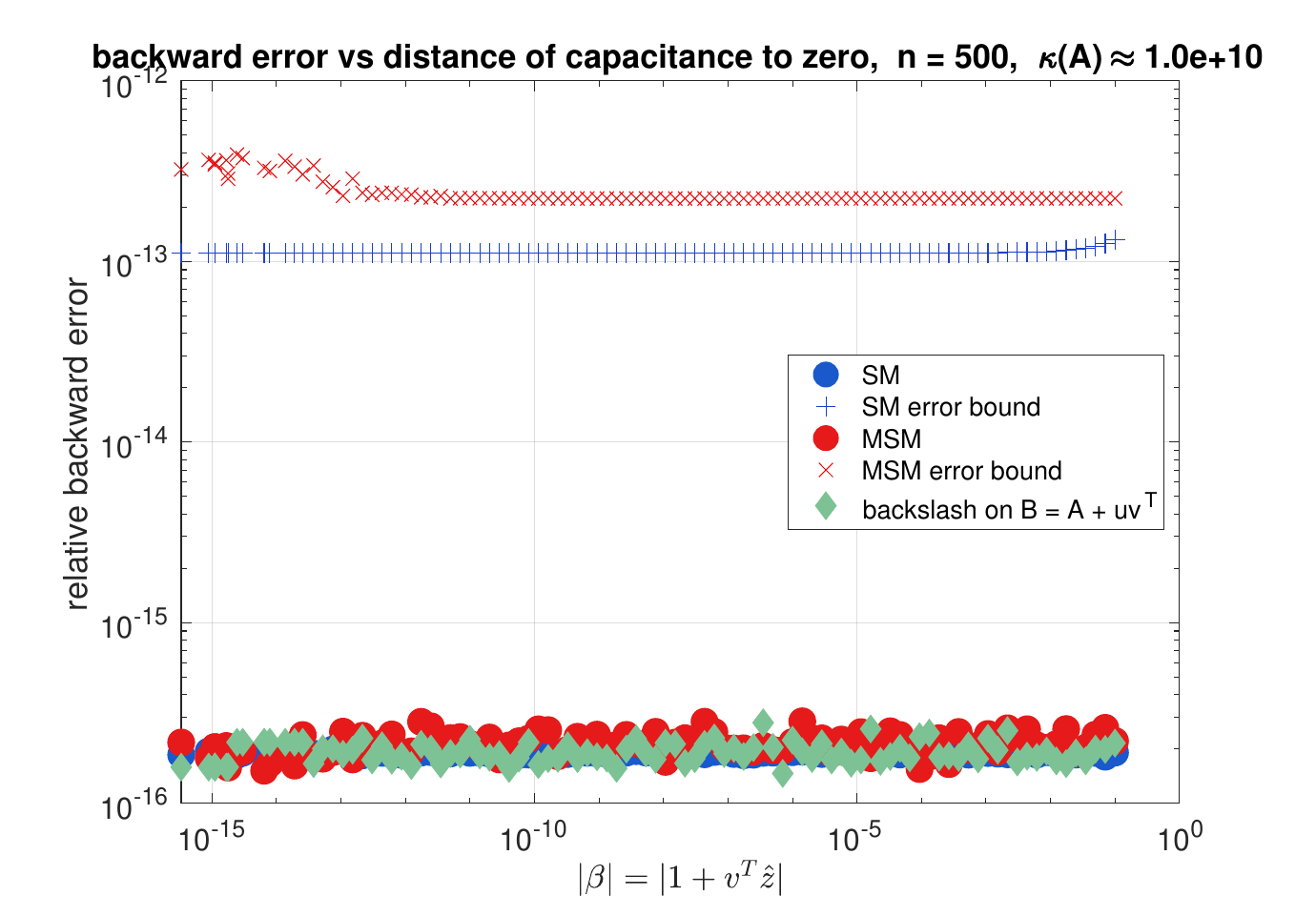}
\caption{Backward error of different methods for the Example~\ref{sing_capacitance2:ex}.}
\label{fig:SM_MSM_den_zero_backErr2}
 \end{figure}
 
 Again there is no loss of backward stability in either SM or MSM, but the mechanism is different from the previous example. This time the computed solutions $\widehat x$ themselves have very large norms, and that growth enters the denominator of the Rigal--Gaches formula~\eqref{eq:backErrB_def}, absorbing the inflated residuals.

To be concrete, in one problem instance we find $\widehat\beta \approx 3.0 \times 10^{-13}$ paired with $\widehat\alpha \approx 1.6$: in contrast to the previous example, $\widehat\alpha$ does not shrink in proportion with $\widehat\beta$. The quotient $\widehat\theta = \widehat\alpha/\widehat\beta \approx 4.3 \times 10^{12}$ is correspondingly huge, and all three methods --- SM, MSM, and backslash on $B$ --- return solutions of comparably large norm, with backslash producing the slightly smallest. The absolute residual norms of SM and MSM are both of order $10^{7}$, large by any standalone measure; yet with $\|\widehat x\| \sim 10^{23}$, the product $\|B\|\,\|\widehat x\|$ in the Rigal--Gaches denominator controls these residuals and the relative backward error remains small. For reference, the same run gives $\|\widehat y\| \approx \|\widehat z\| \approx 10^{10}$. The largeness of $\widehat\theta$ has a second consequence: $\widehat\theta\,\widehat z$ dominates $\widehat y$ in $\widehat x = \widehat y - \widehat\theta\,\widehat z$, so no catastrophic cancellation arises when SM assembles $\widehat x$. For MSM on the same instance, the correction numerator $v^T w - \theta$ evaluates to roughly $7.9 \times 10^{-2}$, yielding $\widehat\theta_1 \approx 10^{11}$. What keeps the backward error small here is therefore not that the algorithm sidesteps small $\widehat\beta$ --- it does not --- but that $\|\widehat x\|$ inflates by precisely the factor needed for the Rigal--Gaches denominator to absorb the inflated residual.

It is worth pausing to ask whether such solutions are meaningful in applications. In practice, a large-norm solution might be a mathematical artefact rather than a usable answer; see~\cite[pp. 101-102]{gill2021numerical} for instance. In our context, driving $\widehat\beta \to 0$ steers $B = A + uv^T$ towards singularity by construction, so the underlying linear system loses physical content long before $\widehat x$ reaches such magnitudes. Encountering $\widehat\beta \ll 1$ with $\widehat\alpha$ of order one is therefore best read as a diagnostic --- the perturbation has rendered the system close to singular.
\end{example}

\section*{Summary and future directions}
In this paper we have introduced the Modified Sherman-Morrison (MSM) algorithm and derived backward and forward error bounds for both SM and MSM. These bounds yield simple growth factors that are easy to compute a posteriori and, when of order one, act as sufficient conditions for the backward and forward stability of both algorithms. 

We have been unable to construct adversarial examples in which MSM is either backward or forward unstable, despite targeted attempts to do so. An unconditional proof --- or disproof --- of the backward or forward stability of MSM remains elusive, and we leave both as open questions for future work; the bounds developed here provide a natural foundation on which such an analysis could be built.

It would also be interesting to extend MSM to the general rank-$k$ Sherman-Morrison-Woodbury (SMW) formula, and to least-squares problems, exploring the extent to which the self-correction strategy of MSM can enhance the stability properties of SMW and \texttt{WoodburyLS}~\cite{guttel2024sherman}.

\section*{Acknowledgments}
We are grateful to David Bindel for pointing out the connection between the Sherman-Morrison formula and block elimination for bordered linear systems, and for bringing~\cite{Govaerts91} to our attention.  

\bibliography{MSM} 

\appendix
\bigskip
\section{Proof of an elementary result}\label{sec:App1}

\begin{lemma}
\label{lem:C_B_prime}
If assumptions of Theorem~\ref{thm:SM-back-bnd} are satisfied and 
\[
\widetilde c_1 := (n+2+\CS) \frac{\|A\| + \|u\| \|v\|}{\|B\|}
\]
then $\|\Delta B\| \le \eps_M \widetilde c_3 \|B\|$, where second-order terms in $\eps_M$ are dropped.
\end{lemma}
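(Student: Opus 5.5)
The plan is to bound $\Delta B := \Delta A + u\,\Delta v^T + \Delta u\, v^T + \Delta u\,\Delta v^T$ term by term, using the perturbation bounds of Theorem~\ref{thm:SM-back-bnd}, and then divide by $\|B\|$. Note first that the conclusion as typed refers to $\widetilde c_3$. In the context where the lemma is invoked (the proof of Theorem~\ref{thm:SM_FerrBnd}) the intended constant is clearly $\widetilde c_1$ as defined in the lemma, so I will prove $\|\Delta B\| \le \eps_M \widetilde c_1 \|B\|$ up to $\mathcal O(\eps_M^2)$.

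\textbf{Step 1 (triangle inequality).} Submultiplicativity of the 2-norm gives
\[
\|\Delta B\| \le \|\Delta A\| + \|u\|\,\|\Delta v\| + \|\Delta u\|\,\|v\| + \|\Delta u\|\,\|\Delta v\|.
\]
Here I use $\|u w^T\| = \|u\|\,\|w\|$ for rank-one outer products.

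\textbf{Step 2 (insert the bounds).} I substitute \eqref{eq:DeltABnd}, the bound $\|\Delta u\|\le \CS\eps_M\|u\|$, and \eqref{eq:DeltvBnd}. The last product $\|\Delta u\|\,\|\Delta v\|$ is $\mathcal O(\eps_M^2)\|u\|\,\|v\|$ and is dropped. What remains is
\[
\|\Delta B\| \le \eps_M\Big((2+\CS)\|A\| + (n+2+\CS)\|u\|\,\|v\|\Big) + \mathcal O(\eps_M^2).
\]

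\textbf{Step 3 (uniform constant).} Since $2+\CS \le n+2+\CS$, both coefficients are bounded by $n+2+\CS$. This gives $\|\Delta B\| \le \eps_M (n+2+\CS)(\|A\|+\|u\|\,\|v\|)$, which equals $\eps_M \widetilde c_1 \|B\|$ by the definition of $\widetilde c_1$, up to second-order terms.

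There is no real obstacle. The only care needed is the bookkeeping of which terms are second order. The $\mathcal O(\eps_M^2)$ remainders in \eqref{eq:DeltABnd} and \eqref{eq:DeltvBnd} scale with $\|A\|$ and $\|u\|\,\|v\|$ respectively, so they are absorbed as $\mathcal O(\eps_M^2)(\|A\|+\|u\|\,\|v\|)$. This is consistent with the statement's convention that second-order terms are dropped.
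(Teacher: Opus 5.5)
Your proposal is correct and follows the paper's proof essentially verbatim. Both apply the triangle inequality to the four terms of $\Delta B$, substitute the bounds from Theorem~\ref{thm:SM-back-bnd}, drop $\|\Delta u\|\,\|\Delta v\|$ as second order, and majorize $2+\CS$ by $n+2+\CS$. You are also right that $\widetilde c_3$ in the conclusion is a typo for $\widetilde c_1$; the paper's proof establishes the bound with $\widetilde c_1$, and that is the version Theorem~\ref{thm:SM_FerrBnd} uses.
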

\begin{proof}
Taking norms from both sides of
\[
\Delta B:= \Delta A + u\ \Delta v^T+\Delta u\ v^T +\Delta u\ \Delta v^T
\]
gives
\begin{align*}
\| \Delta B\| & \leq \| \Delta A\| + \|u\| \ \|\Delta v\|+ \| \Delta u\| \ \|v\| +\| \Delta u\| \ \|\Delta v\|\\
& \leq (2+\CS) \eps_M \| A\| + \|u\| (n+2) \eps_M \| v\|+ \CS \eps_M \| u\| \ \|v\| + \mathcal O(\eps_M^2)\\
& = \eps_M \Big( (2+\CS) \| A\| + (n+2+\CS) \|u\| \| v\| \Big)+ \mathcal O(\eps_M^2)\\
& \leq \eps_M (n+2+\CS) \Big( \| A\| +  \|u\| \| v\| \Big)+ \mathcal O(\eps_M^2)\\
& = \eps_M (n+2+\CS) \Big(\frac{ \| A\| +  \|u\| \| v\|}{\|B\|} \Big) \|B\|+ \mathcal O(\eps_M^2)
\end{align*}
\end{proof}

\end{document}